\theoremstyle{definition}
\theoremstyle{remark}
\theoremstyle{question}
\numberwithin{equation}{section}
\newtheorem{question}{Question}[section]
\begin{document}
\title{The Linear Bound for the Natural Weighted Resolution of the Haar Shift}
\author[S. Pott]{Sandra Pott}
\thanks{}
\address{S. Pott, Centre for Mathematical Sciences, University of Lund,
Lund, Sweden}
\email{sandra@maths.lth.se}
\author[M.C. Reguera]{Maria Carmen Reguera$^1$}
\thanks{1. Research supported by grants 2009SGR-000420 (Generalitat de
Catalunya) and MTM-2010-16232 (Spain)}
\address{M. C. Reguera, Department of Mathematics, Universitat Aut\`onoma de
Barcelona, Barcelona, Spain}
\email{mreguera@mat.uab.cat}
\author[E. T. Sawyer]{Eric T. Sawyer$^2$}
\thanks{2. Research supported in part by a NSERC Grant.}
\address{E. T. Sawyer, Department of Mathematics, McMaster University,
Hamilton, Canada}
\email{sawyer@mcmaster.ca}
\author[B. D. Wick]{Brett D. Wick$^3$}
\address{Brett D. Wick, School of Mathematics\\
Georgia Institute of Technology\\
686 Cherry Street\\
Atlanta, GA USA 30332-0160}
\email{wick@math.gatech.edu}
\urladdr{www.math.gatech.edu/~wick}
\thanks{3. Research supported in part by National Science Foundation DMS
grants \# 1001098 and \# 955432.}
\thanks{The authors would like to thank the Banff International Research
Station for the Banff--PIMS Research in Teams support for the project: The
Sarason Conjecture and the Composition of Paraproducts.}

\begin{abstract}
The Hilbert transform has a linear bound in the $A_{2}$ characteristic on
weighted $L^{2}$, 
\begin{equation*}
\left\Vert H\right\Vert _{L^{2}(w)\rightarrow L^{2}(w)}\lesssim \left[ w%
\right] _{A_{2}},
\end{equation*}%
and we extend this linear bound to the nine constituent operators in the
natural weighted resolution of the conjugation $M_{w^{\frac{1}{2}}}\mathcal{S%
}M_{w^{-\frac{1}{2}}}$ induced by the canonical decomposition of a
multiplier into paraproducts:%
\begin{equation*}
M_{f}=P_{f}^{-}+P_{f}^{0}+P_{f}^{+}.
\end{equation*}%
The main tools used are composition of paraproducts, a \textquotedblleft product
formula\textquotedblright\ for Haar coefficients, the Carleson Embedding
Theorem, and the linear bound for the square function.
\end{abstract}

\maketitle
\tableofcontents

\section{Introduction}

Let $L^{2}\equiv L^{2}\left( \mathbb{R}\right) $ denote the space of square
integrable functions over $\mathbb{R}$. For a weight $w$, i.e., a positive
locally integrable function on $\mathbb{R}$, we set $L^{2}(w)\equiv L^{2}(%
\mathbb{R};w)$. In particular, we will be interested in $A_{2}$ weights,
which are defined by finiteness of their $A_{2}$ characteristic, 
\begin{equation*}
\left[ w\right] _{A_{2}}\equiv \sup_{I}\left\langle w\right\rangle
_{I}\left\langle w^{-1}\right\rangle _{I},
\end{equation*}%
where $\left\langle w\right\rangle _{I}$ denotes the average of $w$ over the
interval $I$.

An operator $T$ is bounded on $L^{2}(w)$ if and only if $M_{w^{\frac{1}{2}%
}}TM_{w^{-\frac{1}{2}}}$ - the conjugation of $T$ by the multiplication
operator $M_{w^{\pm \frac{1}{2}}}$ - is bounded on $L^{2}$. Moreover, the
operator norms are equal:%
\begin{equation*}
\left\Vert T\right\Vert _{L^{2}(w)\rightarrow L^{2}(w)}=\left\Vert M_{w^{%
\frac{1}{2}}}TM_{w^{-\frac{1}{2}}}\right\Vert _{L^{2}\rightarrow L^{2}}.
\end{equation*}%
In the case that $T$ is a dyadic operator adapted to a dyadic grid $\mathcal{%
D}$, it is natural to study weighted norm properties of $T$ by decomposing
the multiplication operators $M_{w^{\pm \frac{1}{2}}}$ into their canonical
paraproduct decomposition relative to the grid $\mathcal{D}$ and Haar basis $%
\left\{ h_{I}^{0}\right\} _{I\in \mathcal{D}}$, i.e.%
\begin{eqnarray*}
M_{w^{\pm \frac{1}{2}}}f &=&\mathsf{P}_{\widehat{w^{\pm \frac{1}{2}}}%
}^{(0,1)}+\mathsf{P}_{\widehat{w^{\pm \frac{1}{2}}}}^{(1,0)}f+\mathsf{P}%
_{\langle w^{\pm \frac{1}{2}}\rangle }^{(0,0)}f \\
&\equiv &\sum_{I\in \mathcal{D}}\left\langle w^{\pm \frac{1}{2}%
},h_{I}^{0}\right\rangle _{L^{2}}\left\langle f,h_{I}^{1}\right\rangle
_{L^{2}}h_{I}^{0} \\
&&+\sum_{I\in \mathcal{D}}\left\langle w^{\pm \frac{1}{2}},h_{I}^{0}\right%
\rangle _{L^{2}}\left\langle f,h_{I}^{0}\right\rangle _{L^{2}}h_{I}^{1} \\
&&+\sum_{I\in \mathcal{D}}\left\langle w^{\pm \frac{1}{2}},h_{I}^{1}\right%
\rangle _{L^{2}}\left\langle f,h_{I}^{0}\right\rangle _{L^{2}}h_{I}^{0},
\end{eqnarray*}%
(above $h_{I}^{1}$ is the averaging function) and then decomposing $M_{w^{%
\frac{1}{2}}}TM_{w^{-\frac{1}{2}}}$ into the nine canonical individual
paraproduct composition operators:%
\begin{eqnarray}
M_{w^{\frac{1}{2}}}TM_{w^{-\frac{1}{2}}} &=&\left( \mathsf{P}_{\widehat{w^{%
\frac{1}{2}}}}^{(0,1)}+\mathsf{P}_{\widehat{w^{\frac{1}{2}}}}^{(1,0)}+%
\mathsf{P}_{\langle w^{\frac{1}{2}}\rangle }^{(0,0)}\right) T\left( \mathsf{P%
}_{\widehat{w^{-\frac{1}{2}}}}^{(0,1)}+\mathsf{P}_{\widehat{w^{-\frac{1}{2}}}%
}^{(1,0)}+\mathsf{P}_{\langle w^{-\frac{1}{2}}\rangle }^{(0,0)}\right)
\label{canoncialpara} \\
&\equiv &Q_{T,w}^{\left( 0,1\right) ,\left( 0,1\right) }+Q_{T,w}^{\left(
0,1\right) ,\left( 1,0\right) }+Q_{T,w}^{\left( 0,1\right) ,\left(
0,0\right) }  \notag \\
&&+Q_{T,w}^{\left( 1,0\right) ,\left( 0,1\right) }+Q_{T,w}^{\left(
1,0\right) ,\left( 1,0\right) }+Q_{T,w}^{\left( 1,0\right) ,\left(
0,0\right) }  \notag \\
&&+Q_{T,w}^{\left( 0,0\right) ,\left( 0,1\right) }+Q_{T,w}^{\left(
0,0\right) ,\left( 1,0\right) }+Q_{T,w}^{\left( 0,0\right) ,\left(
0,0\right) },  \notag
\end{eqnarray}%
where $Q_{T,w}^{\left( \varepsilon _{1},\varepsilon _{2}\right) ,\left(
\varepsilon _{3},\varepsilon _{4}\right) }\equiv \mathsf{P}_{\widehat{w^{%
\frac{1}{2}}}}^{(\varepsilon _{1},\varepsilon _{2})}T\mathsf{P}_{\widehat{%
w^{-\frac{1}{2}}}}^{(\varepsilon _{3},\varepsilon _{4})}$. We refer to this
decomposition of the conjugation $M_{w^{\frac{1}{2}}}TM_{w^{-\frac{1}{2}}}$
into nine operators as the \emph{natural weighted resolution} of the
operator $T$.

The operators we will be interested in have the property that $\left\Vert
T\right\Vert _{L^{2}(w)\rightarrow L^{2}(w)}\lesssim \left[ w\right]
_{A_{2}} $. And we will be interested in demonstrating that the operator norms of the $Q_{T,w}^{\left( \varepsilon
_{1},\varepsilon _{2}\right) ,\left( \varepsilon _{3},\varepsilon
_{4}\right) }$ are linear in the $A_{2}$ characteristic:%
\begin{equation*}
\left\Vert Q_{T,w}^{\left( \varepsilon _{1},\varepsilon _{2}\right) ,\left(
\varepsilon _{3},\varepsilon _{4}\right) }\right\Vert _{L^{2}\rightarrow
L^{2}}\lesssim \left[ w\right] _{A_{2}}.
\end{equation*}%
%
%
%
%
%
%
%

We wish to apply this general idea to the Haar shift operator.  This is the following dyadic model operator 
\begin{equation*}
M_{w^{\frac{1}{2}}}\mathcal{S}M_{w^{-\frac{1}{2}}}:L^{2} \rightarrow L^{2}
\end{equation*}%
where $\mathcal{S}$ is a shift operator defined on the Haar basis by $%
h_I\mapsto h_{I_-}-h_{I_+}$. Because of linearity, it suffices to consider
just ``half'' of the shift operator $\mathcal{S}$ defined on the Haar basis
by the operator $\mathcal{S} h_I\equiv h_{I_-}$.  Petermichl proved the following result:

\begin{theorem}[Petermichl, \protect\cite{Petermichl}]
\label{linearA_2} Let $w\in A_{2}$. Then 
\begin{equation*}
\left\Vert \mathcal{S}\right\Vert _{L^{2}(w)\rightarrow L^{2}(w)}\lesssim \left[ w%
\right] _{A_{2}}.
\end{equation*}%
\end{theorem}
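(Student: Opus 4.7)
The plan is to apply the natural weighted resolution (\ref{canoncialpara}) with $T=\mathcal{S}$ and prove that each of the nine composition operators $Q_{\mathcal{S},w}^{(\varepsilon_1,\varepsilon_2),(\varepsilon_3,\varepsilon_4)}$ has operator norm on $L^2$ bounded by a constant times $[w]_{A_2}$; the theorem then follows by summing nine estimates and applying the triangle inequality. The key simplification is that $\mathcal{S}$ acts on the Haar basis by the explicit rule $\mathcal{S}h_I=h_{I_-}$, so every composition can be written as a bilinear sum over dyadic intervals $I$ whose coefficients involve only the Haar coefficients $\widehat{w^{1/2}}(I)$, $\widehat{w^{-1/2}}(I)$ and the averages $\langle w^{\pm 1/2}\rangle_I$, indexed by $I_-$ on the left and by $I$ on the right.

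I would then unfold each composition in the Haar basis and identify its kernel, splitting the nine operators into three families according to the main tool used. The ``pure paraproduct'' operators, where both outer factors are averaging paraproducts of the form $\mathsf{P}^{(0,0)}_{\langle w^{\pm 1/2}\rangle}$, reduce to standard dyadic paraproduct estimates controlled by the Carleson Embedding Theorem applied to a symbol with Carleson norm $\lesssim [w]_{A_2}$. The ``square function'' operators, where both outer factors are of martingale type $\mathsf{P}^{(0,1)}$ or $\mathsf{P}^{(1,0)}$, produce after Cauchy--Schwarz a sum estimable by the dyadic square function, whose linear $A_2$ bound is a known ingredient. The remaining mixed operators require combining these tools with the \emph{product formula} for Haar coefficients, which rewrites $\widehat{w^{1/2}}(I)\,\widehat{w^{-1/2}}(I)$ and $\langle w^{1/2}\rangle_I\langle w^{-1/2}\rangle_I$ as an algebraic combination of the averages and Haar coefficients on the children of $I$, after which each regrouped sum can be matched to either the Carleson Embedding Theorem or the square function bound.

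The main obstacle I anticipate lies in the mixed cases such as $Q_{\mathcal{S},w}^{(1,0),(0,1)}$, where the shift $\mathcal{S}$ introduces an asymmetry between the left and right interval indexing: the Haar index appearing on the right is $I$ while the Haar index on the left is $I_-$, and the resulting sum is neither a paraproduct nor a square function until one applies the product formula to convert the $A_2$-type product $\langle w^{1/2}\rangle_I \langle w^{-1/2}\rangle_I$ into a form compatible with one of the known estimates. Telescoping these averages and splitting according to whether $I_-$ or $I_+$ appears should reduce each piece to either a Carleson-type bound or a square function bound with constant $\lesssim [w]_{A_2}$, but the combinatorial bookkeeping of which child and which paraproduct symbol contributes in each of the mixed cases will be the delicate part of the argument.
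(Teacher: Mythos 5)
Your proposal is circular. You are proposing to prove Theorem \ref{linearA_2} by expanding $M_{w^{1/2}}\mathcal{S}M_{w^{-1/2}}$ into the nine paraproduct compositions of \eqref{canoncialpara} and bounding each by $[w]_{A_2}$, but this is precisely the content of Theorem \ref{linearA_2_pararesolution}, and the paper's proof of that theorem explicitly \emph{uses} Theorem \ref{linearA_2} for one of the nine terms. Specifically, eight of the nine terms are estimated directly, but the remaining term \eqref{VeryDifficult}, namely $\mathsf{P}_{\widehat{w^{1/2}}}^{(0,1)}\mathcal{S}\mathsf{P}_{\widehat{w^{-1/2}}}^{(1,0)}$, is bounded in Section \ref{subsec:VeryDifficult} only by writing it as $M_{w^{1/2}}\mathcal{S}M_{w^{-1/2}}$ minus the other eight terms and appealing to Petermichl's bound $\left\Vert M_{w^{1/2}}\mathcal{S}M_{w^{-1/2}}\right\Vert_{L^2\to L^2}\lesssim[w]_{A_2}$ --- that is, to the very theorem you are trying to prove. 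The paper then poses Questions \ref{question1} and \ref{question2} as open precisely because the authors could not close this gap: they state that ``the tools used in this paper currently appear to be unable to resolve this term.'' Your plan would therefore require solving an open problem stated in the paper itself; as written, it does not constitute a proof of Theorem \ref{linearA_2}. (Theorem \ref{linearA_2} is in fact cited from \cite{Petermichl}, where it is proved by a different route --- a Bellman-function style estimate --- not by this paraproduct resolution.)

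You have also misidentified where the difficulty lies. You flag $Q_{\mathcal{S},w}^{(1,0),(0,1)}=\mathsf{P}_{\widehat{w^{1/2}}}^{(1,0)}\mathcal{S}\mathsf{P}_{\widehat{w^{-1/2}}}^{(0,1)}$ as the hard mixed case, but this is term \eqref{Easy1}: the internal indices are a genuine Haar index and the shift can be absorbed, reducing it to a $\mathsf{P}^{(1,1)}$ paraproduct handled by the Carleson Embedding Theorem. The genuinely problematic term is $Q_{\mathcal{S},w}^{(0,1),(1,0)}$, where both internal indices are averaging functions $h^1$, so the shift cannot be absorbed and one must confront $\langle\mathcal{S}h_J^1,h_L^1\rangle_{L^2}$. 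This pairing is \emph{nonlocal}: it is nonzero even when $J\cap L=\emptyset$ (for instance when $J$ and $L$ are dyadic brothers), and the $J\cap L=\emptyset$ contribution is exactly what the tools of the paper --- the product formula, the Carleson Embedding Theorem, and the square-function estimates, including the modified one of Theorem \ref{ModifiedSquare} and the corona estimate \eqref{if we can1} --- do not appear to control directly. Your suggested strategy (apply the product formula, telescope, match to Carleson or square function bounds) is exactly what the paper does for the $J\cap L\neq\emptyset$ part, but offers nothing new for the disjoint part.
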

This result can be used with an averaging technique, \cite{Petermichl2}, to show that the same estimate persists for the Hilbert transform on $L^2(w)$.  The idea of decomposing a general Calder\'on--Zygmund operator into shifts was further refined by Hyt\"onen in \cites{H, H2} (see also \cites{V,MR2657437, MR1964822}) and was used in his solution of the $A_2$-Conjecture for all Calder\'on--Zygmund operators.  For the simplest proof of the $A_2$-Conjecture the interested reader should consult \cite{L}.

In this paper we will implement the strategy outlined above to extend this result to the natural weighted resolution of the Haar shift transform and obtain the following result.

\begin{theorem}
\label{linearA_2_pararesolution} Let $w\in A_2$ and $\mathcal{S}$ the Haar
shift on $L^2$. Then for the resolution of $\mathcal{S}$ into its canonical
paraproducts as given in \eqref{canoncialpara} we have that each term can be
controlled by a linear power of $\left[ w\right]_{A_2}$. In particular, 
\begin{equation*}
\left\Vert Q_{\mathcal{S},w}^{\left( \varepsilon _{1},\varepsilon
_{2}\right) ,\left( \varepsilon _{3},\varepsilon _{4}\right) }\right\Vert
_{L^{2}\rightarrow L^{2}}\lesssim \left[ w\right] _{A_{2}}.
\end{equation*}
\end{theorem}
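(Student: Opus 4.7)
The plan is to handle the nine operators $Q_{\mathcal{S},w}^{(\varepsilon_1,\varepsilon_2),(\varepsilon_3,\varepsilon_4)}$ case by case, first absorbing the shift via the identity $\mathcal{S} h_I^0 = h_{I_-}^0$ and then expanding each resulting double sum using the product formula for Haar coefficients. Concretely, the product $h_I^{\varepsilon} h_J^{\delta}$ resolves into a linear combination of single Haar or averaging functions $h_K^{\eta}$ depending on the nesting of $I$ and $J$, and repeatedly applying this formula rewrites each composition $\mathsf{P}^{(\varepsilon_1,\varepsilon_2)}_{\widehat{w^{1/2}}}\mathcal{S}\mathsf{P}^{(\varepsilon_3,\varepsilon_4)}_{\widehat{w^{-1/2}}}$ as either a single paraproduct with a modified symbol, a composition of a paraproduct and a Haar multiplier, or a purely diagonal Haar multiplier in the Haar basis.

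Once in this form, each piece should fall under one of three standard estimates. The first is Carleson embedding applied to the sequences $\{\langle w^{\pm 1/2}, h_I^0\rangle\}_{I\in\mathcal{D}}$, whose Carleson norms are bounded by $[w]_{A_2}^{1/2}$; since the sandwich structure of $Q_{\mathcal{S},w}^{(\cdot,\cdot),(\cdot,\cdot)}$ contributes one such factor from each side of the shift, their product yields the desired linear bound. The second is the dyadic square function estimate in the form $\|S f\|_{L^2(w)}\lesssim [w]_{A_2}\|f\|_{L^2(w)}$, which controls the contributions in which pairs of Haar coefficients are squared after the product formula collapses neighbouring indices. The third is the pointwise control $\langle w^{1/2}\rangle_I\langle w^{-1/2}\rangle_I\le [w]_{A_2}^{1/2}$ together with the comparability of $\langle w^{\pm 1/2}\rangle_{I_-}$ and $\langle w^{\pm 1/2}\rangle_I$ on adjacent dyadic intervals, which disposes of the pure Haar multiplier $Q^{(0,0),(0,0)}$ and of residual diagonal pieces.

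I expect the easiest cases to be the fully diagonal $Q^{(0,0),(0,0)}$ and the canonical $Q^{(0,1),(1,0)}$, $Q^{(1,0),(0,1)}$, each of which reduces essentially to the composition of a paraproduct with its formal adjoint after the action of $\mathcal{S}$. The main obstacle will be the "off-diagonal" compositions such as $Q^{(0,1),(0,1)}$ and $Q^{(1,0),(1,0)}$, where the product formula produces correction terms indexed by triples $(I,I_-,J)$ in a nesting configuration that is not immediately compatible with Carleson embedding. The delicate point is to reorganize these corrections into iterated sums in which either the outer sum becomes a Carleson sum against $\widehat{w^{\pm 1/2}}$, or the inner sum is a square function applied to a function whose $L^2(w)$-norm is controllable; any loss of a factor of $[w]_{A_2}^{1/2}$ in this rearrangement must be tracked carefully so that the final bound remains linear rather than $[w]_{A_2}^{3/2}$ or worse.
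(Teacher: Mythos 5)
Your plan correctly identifies the general strategy (absorb the shift, apply the product formula, reduce to Carleson embedding and square-function bounds), and your handling of the terms in which the shift \emph{can} be absorbed is in the same spirit as the paper's. But there is a decisive misidentification of where the difficulty lies, and it would make the proposed proof collapse.

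You list $Q^{(0,1),(1,0)}$ among the ``easiest'' cases, claiming it ``reduces essentially to the composition of a paraproduct with its formal adjoint after the action of $\mathcal{S}$.'' This is exactly backwards. The term $Q^{(0,1),(1,0)} = \mathsf{P}^{(0,1)}_{\widehat{w^{1/2}}}\mathcal{S}\mathsf{P}^{(1,0)}_{\widehat{w^{-1/2}}}$ has both \emph{internal} indices equal to $1$: the right paraproduct outputs $h_J^1$, and the left paraproduct tests against $h_L^1$. So the shift is evaluated as $\left\langle \mathcal{S}h_J^1,\,h_L^1\right\rangle_{L^2}$, and since $h_J^1$ is \emph{not} a Haar function, $\mathcal{S}h_J^1 = \sum_{K\supsetneq J} h_K(c(J))\,h_{K_-}$ involves Haar functions on $K_-$ for \emph{all} dyadic ancestors $K\supsetneq J$. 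The resulting matrix is genuinely nonlocal: $\left\langle \mathcal{S}h_J^1, h_L^1\right\rangle_{L^2}\neq 0$ for many pairs with $J\cap L=\emptyset$, for instance when $J$ and $L$ are dyadic siblings. No absorption of the shift is possible here, and the product formula only disposes of the cases $J\subsetneq L$, $L\subsetneq J$, $L=J$; the disjoint case requires a new idea. Indeed the paper explicitly cannot estimate this term directly: it is bounded only by writing it as $M_{w^{1/2}}\mathcal{S}M_{w^{-1/2}}$ minus the other eight terms and invoking Petermichl's Theorem~\ref{linearA_2}, and the authors pose the direct estimate as Question~\ref{question1}. A proof that treats $Q^{(0,1),(1,0)}$ as easy is therefore wrong, not merely incomplete.

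Two further points. First, your statement that the Carleson norms of $\{\widehat{w^{\pm 1/2}}(I)\}$ are individually bounded by $[w]_{A_2}^{1/2}$ is not correct; $\|\widehat{w^{1/2}}\|_{CM}^2 = \sup_I(\langle w\rangle_I - \langle w^{1/2}\rangle_I^2)$ need not be controlled by $[w]_{A_2}$. The estimates that actually close the argument are \emph{joint} estimates coupling both symbols, such as $\frac{1}{|J|}\sum_{I\subset J}|\widehat{w^{1/2}}(I_-)\widehat{w^{-1/2}}(I)|\le[w]_{A_2}^{1/2}$ and the weighted square-function bounds \eqref{SquareFunctionEst}, \eqref{ModSquareFunctionEst}. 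Second, for the ``difficult'' (but tractable) terms $Q^{(0,1),(0,1)}$, $Q^{(1,0),(1,0)}$, $Q^{(0,1),(0,0)}$, $Q^{(0,0),(1,0)}$ the paper's proof does not just reorganize triples of nested intervals: after applying the product formula the term $T_1$ requires a change to \emph{disbalanced Haar bases} adapted to $w$ and $w^{-1}$ via \eqref{disbalanced_defs1}--\eqref{disbalanced_defs2}, together with Petermichl's corona-type estimates on $\sum_{K\subset J}\widehat{w^{-1}}(K)^2/\langle w^{-1}\rangle_K^j$. Your sketch omits this mechanism, and it is not clear how to control the $\widehat{\psi w^{1/2}}(J_-)$-type terms without it.
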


If one could control each term appearing in Theorem \ref{linearA_2_pararesolution} independent of the Theorem \ref{linearA_2}, this would imply Petermichl's Theorem.  But, for one of the terms, we unfortunately need to resort to the estimate in Theorem \ref{linearA_2}, and will point out an interesting question that we are unable to resolve as of this writing.  However, our point is to demonstrate that the paraproduct operators arising in the canonical resolution of the Haar shift $\mathcal{S}$ are also bounded, and linearly in the $A_{2}$ characteristic.

Finally, we mention that the operators $Q_{\mathcal{I},w}^{\left(\varepsilon _{1},\varepsilon _{2}\right) ,\left( \varepsilon_{3},\varepsilon _{4}\right) }$ in the resolution of the identity $\mathcal{I}=M_{w^{\frac{1}{2}}}\mathcal{I}M_{w^{-\frac{1}{2}}}$ in (\ref{canoncialpara}), are all bounded on $L^{2}$ \emph{if and only if} $w\in A_{2}$, and if so, the operator norm is linear in the $A_{2}$ characteristic. The proof of this fact is easy using the techniques in the proof of Theorem \ref{linearA_2_pararesolution}, and amounts to little more than repeating the arguments without the shift involved.  Key to this proof working is that the identity is a purely local operator, and this points to the difficulty of Haar shift and its non-local nature.

Throughout this paper $\equiv $ means equal by definition, while $A\lesssim
B $ means that there exists an absolute constant $C$ such that $A\leq CB$.

\section{Notation and Preliminary Estimates}

Before proceeding with the proof of Theorem \ref{linearA_2_pararesolution}, we collect a few elementary
observations and necessary notation that will be used frequently throughout
the remainder of the paper.

To define our paraproducts, let $\mathcal{D}$ denote the usual dyadic grid
of intervals on the real line. Define the Haar function $h_{I}^{0}$ and
averaging function $h_{I}^{1}$ by 
\begin{equation*}
h_{I}^{0}\equiv h_{I}\equiv \frac{1}{\sqrt{\left\vert I\right\vert }}\left( 
\mathbf{1}_{I_{-}}-\mathbf{1}_{I_{+}}\right) \text{ and }h_{I}^{1}\equiv 
\frac{1}{\left\vert I\right\vert }\mathbf{1}_{I}\ ,\ \ \ \ \ I\in \mathcal{D}.
\end{equation*}
The paraproduct operators considered in this paper are the following dyadic
operators.

\begin{definition}
\label{Paraproduct_Def} Given a symbol $b=\left\{ b_{I}\right\} _{I\in 
\mathcal{D}}$ and a pair $\left( \alpha ,\beta \right) \in \left\{
0,1\right\} \times \left\{ 0,1\right\} $, define the \emph{dyadic paraproduct%
} acting on a function $f$ by 
\begin{equation*}
\mathsf{P}_{b}^{\left( \alpha ,\beta \right) }f\equiv \sum_{I\in \mathcal{D}%
}b_{I}\left\langle f,h_{I}^{\beta }\right\rangle_{L^2} h_{I}^{\alpha },
\end{equation*}%
where $h_{I}^{0}$ is the Haar function associated with $I$, and $h_{I}^{1}$
is the average function associated with $I$. The index $\left( \alpha ,\beta
\right) $ is referred to as the \emph{type} of $\mathsf{P}_{b}^{\left(
\alpha ,\beta \right) }$.
\end{definition}

For a function $b$ and $I\in \mathcal{D}$ we let 
\begin{eqnarray*}
\widehat{b}(I) &\equiv &\left\langle b,h_{I}^{0}\right\rangle _{L^{2}} \\
\left\langle b\right\rangle _{I} &\equiv &\left\langle
b,h_{I}^{1}\right\rangle _{L^{2}},
\end{eqnarray*}%
denote the corresponding sequences indexed by the dyadic intervals $I\in 
\mathcal{D}$. With this notation, the canonical paraproduct decomposition of
the pointwise multiplier operator $M_{b}$ is given by 
\begin{equation*}
M_{b}=P_{\widehat{b}}^{(0,1)}+P_{\widehat{b}}^{(1,0)}+P_{\langle b\rangle
}^{(0,0)}.
\end{equation*}

At points in the argument below we will have to resort to the use of
disbalanced Haar functions. To do so, we introduce some additional notation.
Given a weight $\sigma $ on $\mathbb{R}$ we set 
\begin{equation}
C_{K}(\sigma )\equiv \sqrt{\frac{\left\langle \sigma \right\rangle
_{K_{+}}\left\langle \sigma \right\rangle _{K_{-}}}{\left\langle \sigma
\right\rangle _{K}}}\quad \text{ and }\quad D_{K}(\sigma )\equiv \frac{%
\widehat{\sigma }(K)}{\left\langle \sigma \right\rangle _{K}}.
\label{disbalanced_defs1}
\end{equation}%
Then we have 
\begin{equation}
h_{K}=C_{K}(\sigma )h_{K}^{\sigma }+D_{K}(\sigma )h_{K}^{1}
\label{disbalanced_defs2}
\end{equation}%
where $\left\{ h_{K}^{\sigma }\right\} _{K\in \mathcal{D}}$ is the $%
L^{2}(\sigma )$ normalized Haar basis. For an interval $K\in \mathcal{D}$ we
let 
\begin{equation*}
\mathbb{E}_{K}^{\sigma }(f)\equiv \frac{1}{\sigma (K)}\int_{K}f\sigma dx
\end{equation*}%
where $\sigma (K)=\int_{K}\sigma dx$. When $\sigma $ is Lebesgue measure we
simply write $\left\langle f\right\rangle_K=\mathbb{E}_{K}(f)\equiv \frac{1}{%
\left\vert K\right\vert }\int_{K}fdx$.


For two functions $f$ and $g$ we have the following \textquotedblleft
product formula\textquotedblright\ that appears in higher dimensions in \cite%
{SSU} 
\begin{equation*}
\widehat{fg}\left( I\right) =\sum_{J\subsetneq I}\widehat{f}\left( J\right) 
\widehat{g}\left( J\right) \frac{\delta \left( J,I\right) }{\sqrt{\left\vert
I\right\vert }}+\widehat{f}\left( I\right) \left\langle g\right\rangle _{I}+%
\widehat{g}\left( I\right) \left\langle f\right\rangle _{I}\ \quad \forall
I\in \mathcal{D},
\end{equation*}%
where $\delta (J,I)=1$ if $J\subset I_{-}$ and $\delta (J,I)=-1$ if $%
J\subset I_{+}$.

For a sequence $a=\{a_I\}_{I\in\mathcal{D}}$ define. 
\begin{eqnarray*}
\left\Vert a\right\Vert _{\ell ^{\infty }} &\equiv &\sup_{I\in \mathcal{D}%
}\left\vert a_{I}\right\vert ; \\
\left\Vert a\right\Vert _{CM} &\equiv &\sqrt{\sup_{I\in \mathcal{D}}\frac{1}{%
\left\vert I\right\vert }\sum_{J\subset I}\left\vert a_{J}\right\vert^{2}}\ .
\end{eqnarray*}

The following estimates are well-known.

\begin{lemma}
\label{ClassicalCharacterization} We have the characterizations: 
\begin{eqnarray}
\left\Vert \mathsf{P}_{a}^{\left( 0,0\right) }\right\Vert _{L^{2}\rightarrow
L^{2}} &=&\left\Vert a\right\Vert _{\ell ^{\infty }};  \label{simple} \\
\left\Vert \mathsf{P}_{a}^{\left( 0,1\right) }\right\Vert _{L^{2}\rightarrow
L^{2}} &=&\left\Vert \mathsf{P}_{a}^{\left( 1,0\right) }\right\Vert
_{L^{2}\rightarrow L^{2}}\approx \left\Vert a\right\Vert _{CM}\ .  \label{CM}
\end{eqnarray}
\end{lemma}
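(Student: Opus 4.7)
The proof splits into the three cases according to the type of paraproduct.

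\textbf{The diagonal case $(0,0)$.} Since $\{h_I^0\}_{I\in\mathcal{D}}$ is an orthonormal basis of $L^2$, the operator $\mathsf{P}_a^{(0,0)}f=\sum_I a_I\langle f,h_I^0\rangle h_I^0$ is diagonal in this basis. Its operator norm is therefore the supremum of the moduli of its diagonal entries, which is exactly $\|a\|_{\ell^\infty}$. Both inequalities are immediate: Parseval gives $\|\mathsf{P}_a^{(0,0)}f\|_2^2=\sum_I|a_I|^2|\langle f,h_I^0\rangle|^2\leq\|a\|_{\ell^\infty}^2\|f\|_2^2$, and testing on a single Haar function $f=h_{I_0}^0$ recovers $|a_{I_0}|$.

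\textbf{The upper bound for $(0,1)$.} By duality it suffices to estimate $\langle \mathsf{P}_a^{(0,1)}f,g\rangle=\sum_I a_I\langle f\rangle_I\widehat{g}(I)$. Cauchy--Schwarz in the dyadic index gives
\begin{equation*}
|\langle \mathsf{P}_a^{(0,1)}f,g\rangle|\leq\Bigl(\sum_I|a_I|^2\langle f\rangle_I^2\Bigr)^{1/2}\Bigl(\sum_I|\widehat{g}(I)|^2\Bigr)^{1/2}.
\end{equation*}
The second factor equals $\|g\|_2$ by Parseval, and the Carleson Embedding Theorem applied to the sequence $\{|a_I|^2\}$ bounds the first factor by $\|a\|_{CM}\|f\|_2$, using that $|\langle f\rangle_I|\leq Mf$ pointwise on $I$ and the $L^2$ boundedness of the maximal function. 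This yields $\|\mathsf{P}_a^{(0,1)}\|_{L^2\to L^2}\lesssim\|a\|_{CM}$.

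\textbf{The lower bound for $(0,1)$.} Fix $I_0\in\mathcal{D}$ and test on $f=\mathbf{1}_{I_0}$, whose $L^2$ norm squared is $|I_0|$. One computes $\langle \mathbf{1}_{I_0},h_J^1\rangle=1$ for every $J\subseteq I_0$, so
\begin{equation*}
\mathsf{P}_a^{(0,1)}\mathbf{1}_{I_0}=\sum_{J\subseteq I_0}a_J h_J^0+\sum_{J\supsetneq I_0}a_J\frac{|I_0|}{|J|}h_J^0.
\end{equation*}
Since each $h_J^0$ appears at most once and the Haar system is orthonormal, the $L^2$ norm squared of the right-hand side is at least $\sum_{J\subseteq I_0}|a_J|^2$. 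Dividing by $|I_0|$ and taking the supremum over $I_0$ produces $\|\mathsf{P}_a^{(0,1)}\|_{L^2\to L^2}^2\gtrsim\|a\|_{CM}^2$.

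\textbf{The case $(1,0)$.} A direct computation of the adjoint gives $(\mathsf{P}_a^{(0,1)})^\ast=\mathsf{P}_a^{(1,0)}$ (with complex conjugation of $a$ if we were working over $\mathbb{C}$), so the two operators share the same $L^2\to L^2$ norm, and the equivalence with $\|a\|_{CM}$ is inherited. No step here is a genuine obstacle: the upper bound is the Carleson Embedding Theorem in its most classical guise, and the lower bound is a one-line testing argument. If there is any subtlety it lies in the choice of test function for the lower bound, namely in noticing that $\mathbf{1}_{I_0}$ has the effect of trivializing the averaging factors $\langle\cdot\rangle_J$ for all $J\subseteq I_0$ simultaneously.
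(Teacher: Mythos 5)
Your proof is correct. The paper itself offers no argument for this lemma (it is simply cited as ``well-known''), so there is nothing to diverge from; your write-up is the standard self-contained proof and it meshes exactly with the tools the paper has already set up. In particular, for the upper bound in \eqref{CM} you do not even need the aside about the maximal function: taking $v\equiv 1$ and $\alpha_I=|a_I|^2$ in the Carleson Embedding Theorem as stated in the paper gives $\sum_I |a_I|^2\langle f\rangle_I^2\lesssim \|a\|_{CM}^2\|f\|_2^2$ directly (the maximal-function remark is really a proof of that theorem, not part of its application), and since $\{h_I^0\}$ is orthonormal one can also skip the duality step because $\|\mathsf{P}_a^{(0,1)}f\|_{L^2}^2=\sum_I|a_I|^2\langle f\rangle_I^2$ on the nose. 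Your diagonal computation for \eqref{simple}, the testing on $\mathbf{1}_{I_0}$ for the lower bound (correctly noting that the terms with $J\supsetneq I_0$ only add nonnegative contributions), and the adjoint identity $(\mathsf{P}_a^{(0,1)})^\ast=\mathsf{P}_{\bar a}^{(1,0)}$ are all sound; the only implicit convention is the usual a priori assumption that one works with finitely supported symbols (or that the defining sums converge), which is standard and harmless.
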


The Carleson Embedding Theorem is a fundamental tool in this paper and will
be used frequently. It is the following:

\begin{theorem}[Carleson Embedding Theorem]
Let $v\geq 0$, and let $\{\alpha _{I}\}_{I\in \mathcal{D}}$ be positive
constants. The following two statements are equivalent: 
\begin{eqnarray*}
\sum_{I\in \mathcal{D}}\alpha _{I}\,\mathbb{E}_{I}^{v}\left( f\right) ^{2}
&\leq &4C\left\Vert f\right\Vert _{L^{2}(v)}^{2}\quad \forall f\in L^{2}(v);
\\
\sup_{I\in \mathcal{D}}v(I)^{-1}\sum_{J\subset I}\alpha _{J}\left\langle
v\right\rangle _{J}^{2} &\leq &C.
\end{eqnarray*}
\end{theorem}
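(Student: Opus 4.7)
My plan is to prove the equivalence by the classical methods for Carleson embedding, with the easy direction by testing and the substantive direction by a layer-cake argument driven by the weighted dyadic maximal function.

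For the direction (1) $\Rightarrow$ (2), I would test (1) against suitable indicator-type functions. Inserting $f = \mathbf{1}_I$ gives $\mathbb{E}_J^v(\mathbf{1}_I) = 1$ for every $J \subset I$ and $\|\mathbf{1}_I\|_{L^2(v)}^2 = v(I)$, which immediately produces a Carleson-type bound on the tail sums of $\alpha_J$. To recover the weighted factor $\langle v \rangle_J^2$ appearing in (2), I would dualize: testing the adjoint embedding against atoms in the $L^2(v)$-normalized Haar basis transfers weighted averages of $v$ into the coefficients, yielding the stated Carleson condition.

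The main content lies in (2) $\Rightarrow$ (1). I would work with the weighted dyadic maximal function
\begin{equation*}
M^v f(x) \equiv \sup_{I \ni x,\, I \in \mathcal{D}} \mathbb{E}_I^v(|f|),
\end{equation*}
which satisfies $\|M^v f\|_{L^2(v)} \leq 2 \|f\|_{L^2(v)}$ by the standard Doob inequality. Assuming $f \geq 0$, partition $\mathcal{D}$ into the shells $\mathcal{S}_k \equiv \{I : 2^k < \mathbb{E}_I^v f \leq 2^{k+1}\}$ for $k \in \mathbb{Z}$, and let $\mathcal{M}_k$ denote the disjoint collection of maximal intervals on which $\mathbb{E}_I^v f > 2^k$, so that $\bigcup \mathcal{M}_k = \{M^v f > 2^k\}$. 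Every $I \in \mathcal{S}_k$ is contained in a unique $M \in \mathcal{M}_k$, and
\begin{equation*}
\sum_I \alpha_I (\mathbb{E}_I^v f)^2 \leq 4 \sum_{k \in \mathbb{Z}} 4^k \sum_{M \in \mathcal{M}_k} \sum_{\substack{I \in \mathcal{S}_k \\ I \subset M}} \alpha_I.
\end{equation*}
For each $M \in \mathcal{M}_k$, the innermost sum is controlled by applying the Carleson hypothesis (2), which by summation over the disjoint principal intervals $\mathcal{M}_k$ produces a bound of $C v(\{M^v f > 2^k\})$. Summing the geometric series in $k$ reproduces $\|M^v f\|_{L^2(v)}^2$, which closes by Doob.

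The main obstacle is carrying the weighted factor $\langle v \rangle_J^2$ in (2) through the stopping-time decomposition so that it matches the unweighted sum on the left of (1). I would handle this either by refining the stopping criterion to stop also on jumps of $\langle v \rangle$, or more cleanly by rewriting the problem under the substitution $\beta_I \equiv \alpha_I \langle v \rangle_I^2$ and transformed test function $g = fv$, which converts the inequality to a classical Carleson embedding against $v^{-1}$ (respectively, the unweighted embedding after absorbing the averages). After this reconciliation the rest of the argument is the standard stopping-time estimate described above.
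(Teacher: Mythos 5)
Your substantive machinery is sound but it proves a different theorem than the one displayed, and the devices you propose to bridge the difference cannot work because the two displayed statements are in fact \emph{not} equivalent for general $v$. With $\mathbb{E}_I^v(f)$ (the $v$-average) on the left, the matching Carleson condition is $\sum_{J\subset I}\alpha_J\leq C\,v(I)$, \emph{without} the factor $\langle v\rangle_J^2$; your stopping-time decomposition on the level sets of $\mathbb{E}_I^v f$ together with Doob's inequality, plus indicator testing for the converse, is exactly the standard proof of that (NTV) form, and that is also the form this paper actually invokes (either with $v$ equal to Lebesgue measure, where $\langle v\rangle_J\equiv 1$ and the two conditions coincide, or with $v=w$ and a verified condition of the form $\sum_{K\subset J}\alpha_K\lesssim [w]_{A_2}^2\,w(J)$). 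To see that the printed pairing fails, take $v(x)=|x|^{-1/2}$ on $[0,1]$ (and $v=1$ elsewhere), $I_k=[0,2^{-k}]$, and $\alpha_{I_k}=v(I_k)=2\cdot 2^{-k/2}$ with all other $\alpha_J=0$. Then $\sum_{J\subset I}\alpha_J\leq 4\,v(I)$ for every dyadic $I$, so the first display holds with an absolute constant, whereas at $I=[0,1]$ one has $\sum_{k\geq 0}\alpha_{I_k}\langle v\rangle_{I_k}^2\approx\sum_{k\geq 0}2^{k/2}=\infty$, so the second display fails outright. The condition with $\langle v\rangle_J^2$ characterizes instead the embedding $\sum_I\alpha_I\langle fv\rangle_I^2\leq 4C\|f\|_{L^2(v)}^2$, since $\langle fv\rangle_I=\langle v\rangle_I\,\mathbb{E}_I^v(f)$; the statement as printed conflates these two equivalent formulations of the theorem.

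Consequently the two reconciliation steps in your outline are genuine gaps rather than routine details. Testing the adjoint against $L^2(v)$-normalized Haar atoms cannot ``transfer'' Lebesgue averages of $v$ into the coefficients -- no argument can, by the example above. And the substitution $\beta_I=\alpha_I\langle v\rangle_I^2$, $g=fv$ converts the hypothesis $\sup_I v(I)^{-1}\sum_{J\subset I}\beta_J\leq C$ into the bound $\sum_I\alpha_I\langle v\rangle_I^2\,\mathbb{E}_I^v(f)^2\leq 4C\|f\|_{L^2(v)}^2$, which differs from the first display by the uncontrolled factor $\langle v\rangle_I^2$ (similarly, stopping additionally on jumps of $\langle v\rangle$ does not remove this factor). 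The fix is to prove the theorem in the form actually used -- testing condition $\sum_{J\subset I}\alpha_J\leq C\,v(I)$ -- for which your level-set argument is complete as written; note only that the crude summation of $4^k$ over $\{M^v f>2^k\}$ yields a constant larger than the stated $4$, which is immaterial here since the paper only uses the estimate up to absolute constants. (The paper itself quotes the theorem without proof, so there is no internal argument to compare against.)
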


As a simple application of the Carleson Embedding Theorem, one can prove:

\begin{lemma}
\label{ClassicalCharacterization2} Let $a=\{a_I\}_{I\in\mathcal{D}}$ be a
sequence of non-negative numbers. Then 
\begin{equation}  \label{CET}
\left\Vert \mathsf{P}_{a}^{\left( 1,1\right) }\right\Vert _{L^{2}\rightarrow
L^{2}}\lesssim \left\|a^{\frac{1}{2}}\right\|_{CM}^2.
\end{equation}
\end{lemma}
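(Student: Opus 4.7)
The plan is to dualize and apply Cauchy--Schwarz to reduce the estimate to two applications of the Carleson Embedding Theorem with Lebesgue measure as the underlying measure.

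First I would rewrite the paraproduct in the form
\begin{equation*}
\mathsf{P}_a^{(1,1)} f = \sum_{I\in\mathcal{D}} a_I \langle f\rangle_I \frac{\mathbf{1}_I}{|I|},
\end{equation*}
so that for any test function $g\in L^2$,
\begin{equation*}
\langle \mathsf{P}_a^{(1,1)} f,g\rangle_{L^2} = \sum_{I\in\mathcal{D}} a_I\, \langle f\rangle_I\, \langle g\rangle_I.
\end{equation*}
By Cauchy--Schwarz in the indexing variable $I$,
\begin{equation*}
\left|\sum_{I\in\mathcal{D}} a_I\, \langle f\rangle_I\, \langle g\rangle_I\right|
\leq \left(\sum_{I\in\mathcal{D}} a_I\, \langle f\rangle_I^2\right)^{1/2}
\left(\sum_{I\in\mathcal{D}} a_I\, \langle g\rangle_I^2\right)^{1/2}.
\end{equation*}

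Next I would apply the Carleson Embedding Theorem with $v\equiv 1$ and Carleson sequence $\alpha_I = a_I$. The testing condition
\begin{equation*}
\sup_{I\in\mathcal{D}} |I|^{-1}\sum_{J\subset I} a_J \,\langle 1\rangle_J^2
= \sup_{I\in\mathcal{D}} |I|^{-1}\sum_{J\subset I} a_J
= \left\|a^{1/2}\right\|_{CM}^{2}
\end{equation*}
is precisely the quantity appearing on the right hand side of \eqref{CET}; so the Carleson Embedding Theorem yields
\begin{equation*}
\sum_{I\in\mathcal{D}} a_I\, \langle f\rangle_I^2 \lesssim \left\|a^{1/2}\right\|_{CM}^{2}\|f\|_{L^2}^2,
\end{equation*}
and the analogous bound for $g$. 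Combining these with the Cauchy--Schwarz estimate and taking the supremum over $g$ in the unit ball of $L^2$ gives $\|\mathsf{P}_a^{(1,1)}\|_{L^2\to L^2} \lesssim \|a^{1/2}\|_{CM}^2$, as required.

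There is really no obstacle here; the only slightly subtle point is recognizing that one must pass from the sequence $a$ to its square root in order to match the form of the $CM$ norm to the hypothesis of the Carleson Embedding Theorem, which naturally receives $\alpha_I$ linearly rather than quadratically. Note also that non-negativity of $a$ is used so that $a^{1/2}$ makes sense and Cauchy--Schwarz is applied without sign issues.
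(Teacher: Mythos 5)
Your proof is correct and is precisely the ``simple application of the Carleson Embedding Theorem'' that the paper alludes to (the paper states the lemma without giving a proof). Dualizing, applying Cauchy--Schwarz in $I$, and then invoking the CET twice with $v\equiv 1$ and $\alpha_I=a_I$ — together with your observation that the testing constant is exactly $\|a^{1/2}\|_{CM}^2$ — is the intended argument, and the bookkeeping is accurate.
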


Recall that the dyadic square function is given by 
\begin{equation*}
S\phi (x)\equiv \sqrt{\sum_{I\in \mathcal{D}}\left\vert \widehat{\phi }%
(I)\right\vert^{2}\,h_{I}^{1}(x)},
\end{equation*}%
and that for any weight $v\geq 0$ we have%
\begin{equation*}
\left\Vert S\phi \right\Vert _{L^{2}\left( v\right) }^{2}=\sum_{I\in 
\mathcal{D}}\left\vert \widehat{\phi }(I)\right\vert^{2}\left\langle v\right\rangle _{I}\ .
\end{equation*}%
Since $\{h_{I}\}_{I\in \mathcal{D}}$ is an orthonormal basis for $L^{2}(%
\mathbb{R})$, it is trivial that $\left\Vert Sf\right\Vert
_{L^{2}}=\left\Vert f\right\Vert _{L^{2}}$. However, in \cite{PetermichlPott}
it was shown for $w\in A_{2}$ that 
\begin{equation*}
\left\Vert Sf\right\Vert _{L^{2}\left( w\right) }^{2}\lesssim \left[ w\right]
_{A_{2}}^{2}\left\Vert f\right\Vert _{L^{2}(w)}^{2}.
\end{equation*}%
Applying this inequality to $f=w^{-\frac{1}{2}}\mathbf{1}_{I}$ for $I\in 
\mathcal{D}$, along with some obvious estimates, yields the following: 
\begin{equation}
\sum_{J\subset I}\widehat{w^{-\frac{1}{2}}}(J)^{2}\left\langle
w\right\rangle _{J}\lesssim \left[ w\right] _{A_{2}}^{2}|I|\quad \forall
I\in \mathcal{D}.  \label{SquareFunctionEst}
\end{equation}%
A trivial consequence of \eqref{SquareFunctionEst} is, 
\begin{equation}
\sum_{J\subset I}\widehat{w^{-\frac{1}{2}}}(J)^{2}\left\langle w^{\frac{1}{2}%
}\right\rangle _{J}^{2}\lesssim \left[ w\right] _{A_{2}}^{2}|I|\quad \forall
I\in \mathcal{D},  \label{SquareFunctionEst2}
\end{equation}%
since $\left\langle w^{\frac{1}{2}}\right\rangle _{J}^{2}\leq \left\langle
w\right\rangle _{J}$. Because of the symmetry of the $A_{2}$ condition, we
also have these estimates with the roles of $w$ and $w^{-1}$ interchanged.
All of these estimates play a fundamental role at various times when
applying the Carleson Embedding Theorem.

We also need a modified version of estimate \eqref{SquareFunctionEst}, but
which incorporates a shift in the indices. Define the modified square
function $S_{\pi }$ by, 
\begin{equation*}
S_{\pi }\phi (x)\equiv \sqrt{\sum\limits_{I\in \mathcal{D}}\left\vert 
\widehat{\phi }\left( I\right) \right\vert^{2}\,\frac{1}{\left\vert
I\right\vert }\mathbf{1}_{\pi I}(x)}
\end{equation*}%
where $\pi I$ is the dyadic parent of the dyadic interval $I$.
Unfortunately, $S_{\pi }f$ is not pointwise bounded by $Sf$ ($S_{\pi
}h_{K}\equiv 1$ on $\pi K$ but $Sh_{K}$ vanishes outside $K$), yet we show
in the theorem below that $S_{\pi }$ has a linear bound in the
characteristic on weighted spaces.

\begin{theorem}
\label{ModifiedSquare} For any $\phi\in L^2(w)$ we have 
\begin{equation*}
\left\Vert S_{\pi }\phi \right\Vert _{L^{2}\left( w\right) }\lesssim \left[ w%
\right] _{A_{2}}\left\Vert \phi\right\Vert _{L^{2}\left( w\right) }.
\end{equation*}
\end{theorem}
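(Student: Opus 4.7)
The plan is to expand the squared norm via the defining identity
\[
\|S_\pi \phi\|_{L^2(w)}^2 = \sum_{I\in\mathcal{D}} |\widehat\phi(I)|^2\,\frac{w(\pi I)}{|I|},
\]
and then exploit the sibling decomposition $\pi I = I \sqcup \tilde I$ (where $\tilde I$ is the dyadic sibling of $I$). Since $|\pi I|=2|I|$, this gives $\tfrac{w(\pi I)}{|I|} = \langle w\rangle_I + \langle w\rangle_{\tilde I}$, and so
\[
\|S_\pi\phi\|_{L^2(w)}^2 = \|S\phi\|_{L^2(w)}^2 + \sum_{I\in\mathcal{D}} |\widehat\phi(I)|^2 \langle w\rangle_{\tilde I}.
\]
The first term contributes at most $[w]_{A_2}^2\|\phi\|_{L^2(w)}^2$ by the Petermichl--Pott linear square function bound cited in the paper, so everything reduces to controlling the sibling-shifted sum.

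For the shifted sum I would substitute $\phi = f w^{-1/2}$, so that $\|\phi\|_{L^2(w)} = \|f\|_{L^2}$, and expand $\widehat{f w^{-1/2}}(I)$ by the product formula into a diagonal piece $\widehat f(I)\langle w^{-1/2}\rangle_I$, an anti-diagonal Carleson piece $\widehat{w^{-1/2}}(I)\langle f\rangle_I$, and a cascade piece with a nested sum over $J\subsetneq I$. Squaring and summing against $\langle w\rangle_{\tilde I}$, the diagonal piece is dispatched by the pointwise bound
\[
\langle w^{-1/2}\rangle_I^2\langle w\rangle_{\tilde I} \le \langle w^{-1}\rangle_I \cdot 4[w]_{A_2}\langle w\rangle_I \le 4[w]_{A_2}^2,
\]
which combines Jensen's inequality with the $A_2$ estimate $\langle w\rangle_{\tilde I} \le 2\langle w\rangle_{\pi I} \le 4[w]_{A_2}\langle w\rangle_I$. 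The Carleson piece is then handled by the Carleson Embedding Theorem applied to the test sequence $\alpha_I = \widehat{w^{-1/2}}(I)^2 \langle w\rangle_{\tilde I}$; the required testing condition $\sup_K |K|^{-1}\sum_{I\subset K}\alpha_I \lesssim [w]_{A_2}^2$ is to be obtained by reindexing $J=\tilde I$ (noting that $\tilde I \subseteq K$ whenever $I\subsetneq K$) and comparing the resulting sum with \eqref{SquareFunctionEst}.

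The cascade piece is where the main obstacle lies. The plan is to rewrite the nested sum as a dyadic martingale paraproduct and then apply Carleson Embedding a second time, using \eqref{SquareFunctionEst2} as the inner Carleson condition. The delicate point is that the crude pointwise comparison $\langle w\rangle_{\tilde I} \lesssim [w]_{A_2}\langle w\rangle_I$ costs an extra factor of $[w]_{A_2}$ and would degrade the final bound to a cubic dependence on the $A_2$ characteristic; preserving the quadratic dependence of $\|S_\pi\phi\|_{L^2(w)}^2$ demanded by the theorem forces one to absorb the sibling shift at the Carleson testing stage itself, in effect running the classical square function argument ``one level up'' in the dyadic tree rather than transferring via a pointwise sibling-swap, which is not uniformly $L^2(w)$-bounded.
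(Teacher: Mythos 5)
Your opening reduction is correct: writing $\tfrac{w(\pi I)}{|I|}=\langle w\rangle_I+\langle w\rangle_{\tilde I}$ cleanly isolates a known term (the Petermichl--Pott square function bound) from a sibling-shifted remainder $\sum_I|\widehat\phi(I)|^2\langle w\rangle_{\tilde I}$. Your diagonal and Carleson pieces for that remainder are plausible. But, as you yourself flag, the cascade piece is left unresolved: neither the pointwise swap $\langle w\rangle_{\tilde I}\lesssim[w]_{A_2}\langle w\rangle_I$ (which degrades the exponent) nor the vague plan of ``running the argument one level up'' is actually carried out. This is a genuine gap, not a routine detail: the whole content of the theorem is precisely to absorb that sibling shift without paying an extra power of $[w]_{A_2}$, and your proposal stops exactly at that point.

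The paper takes a different and, in retrospect, more economical route that never decomposes $\langle w\rangle_{\pi I}$ at all. It phrases the statement as an operator inequality $\widetilde D_w\lesssim[w]_{A_2}^2\,M_w$, where $\widetilde D_w:h_I\mapsto\langle w\rangle_{\pi I}h_I$, and then observes the crucial \emph{one-sided} monotonicity $\langle w\rangle_{\pi I}\ge\tfrac12\langle w\rangle_I$ (because $w(I)\le w(\pi I)$ and $|\pi I|=2|I|$). This is exactly the direction of the comparison that \emph{is} uniform, so it lets one pass from the shifted inverse multiplier to the unshifted one: $\widetilde D_w^{-1}\le 2 D_w^{-1}$. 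Combined with the Petermichl--Pott bound $\sum_I|\widehat\phi(I)|^2/\langle w\rangle_I\lesssim[w]_{A_2}\|\phi\|_{L^2(w^{-1})}^2$, this gives $\widetilde D_w^{-1}\lesssim[w]_{A_2}M_{w^{-1}}$. The theorem then follows by a two-step $A_2$ argument: first $\widetilde D_w\le[w]_{A_2}(\widetilde D_{w^{-1}})^{-1}$ (the $A_2$ condition applied at $\pi I$), and then $(\widetilde D_{w^{-1}})^{-1}\lesssim[w]_{A_2}M_w$ from the previous step with $w\leftrightarrow w^{-1}$. In short, the paper exploits the asymmetry that the reciprocal parent average \emph{is} pointwise comparable to the reciprocal child average, so the sibling never needs to appear. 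If you want to complete your route, that is the observation you are missing: rather than shifting $I\mapsto\tilde I$, shift $I\mapsto\pi I$ on the inverse side, where the comparison costs only a factor of $2$.
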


\begin{proof}
The proof of this theorem uses the ideas in \cite{PetermichlPott}. Without
loss of generality we may assume both $w$ and $w^{-1}$ are bounded so long
as these bounds do not enter into our estimates. First note that 
\begin{equation*}
\left\Vert S_{\pi }\phi \right\Vert _{L^{2}\left( w\right)
}^{2}=\sum\limits_{I\in \mathcal{D}}\left\vert \widehat{\phi }\left( I\right)
\right\vert^{2}\,\left\langle w\right\rangle _{\pi I}=\left\langle \widetilde{D}%
_{w}\phi ,\phi \right\rangle _{L^{2}}\ ,
\end{equation*}%
where $\widetilde{D}_{w}:L^{2}\rightarrow L^{2}$ is the `discrete
multiplier' map that sends $h_{I}\longrightarrow \left\langle w\right\rangle
_{\pi I}h_{I}$. We also have%
\begin{equation*}
\left\Vert \phi \right\Vert _{L^{2}\left( w\right) }^{2}=\left\langle
M_{w}\phi ,\phi \right\rangle _{L^{2}}\ ,
\end{equation*}%
where $M_{w}:L^{2}\rightarrow L^{2}$ is the operator of pointwise
multiplication by $w$. We first claim the operator inequality%
\begin{equation*}
M_{w}\lesssim \left[ w\right] _{A_{2}}\widetilde{D}_{w}\ ,
\end{equation*}%
which upon taking inverses, is equivalent to 
\begin{equation}
\widetilde{D}_{w}^{-1}\lesssim \left[ w\right] _{A_{2}}\left( M_{w}\right)
^{-1}=\left[ w\right] _{A_{2}}M_{w^{-1}}\ ,  \label{same as}
\end{equation}%
where $\widetilde{D}_{w}^{-1}:L^{2}\rightarrow L^{2}$ is the map that sends $%
h_{I}\longrightarrow \frac{1}{\left\langle w\right\rangle _{\pi I}}h_{I}$.
But, \eqref{same as} is equivalent to%
\begin{equation*}
\sum\limits_{I\in \mathcal{D}}\left\vert \widehat{\phi }\left( I\right) \right\vert^{2}\frac{1}{%
\left\langle w\right\rangle _{\pi I}}\lesssim \left[ w\right]
_{A_{2}}\left\Vert \phi \right\Vert _{L^{2}\left( w^{-1}\right) }^{2}.
\end{equation*}%
At this point we simply observe that since $\left\vert \pi I\right\vert
=2\left\vert I\right\vert $ and $w\left( I\right) \leq w\left( \pi I\right) $%
, we have 
\begin{equation*}
\sum\limits_{I\in \mathcal{D}}\left\vert \widehat{\phi }\left( I\right)\right\vert ^{2}\frac{1}{%
\left\langle w\right\rangle _{\pi I}}\leq 2\sum\limits_{I\in \mathcal{D}}%
\left\vert \widehat{\phi }\left( I\right)\right\vert ^{2}\frac{1}{\left\langle w\right\rangle _{I}}
\end{equation*}%
and then we invoke the following inequality in \cite{PetermichlPott}:%
\begin{equation*}
\sum\limits_{I\in \mathcal{D}}\left\vert \widehat{\phi }\left( I\right) \right\vert^{2}\frac{1}{%
\left\langle w\right\rangle _{I}}\lesssim \left[ w\right] _{A_{2}}\left\Vert
\phi \right\Vert _{L^{2}\left( w^{-1}\right) }^{2},
\end{equation*}%
and this proves \eqref{same as}.

Now we apply a duality argument to obtain%
\begin{equation}
\widetilde{D}_{w}\lesssim \left[ w\right] _{A_{2}}^{2}M_{w}\ ,
\label{duality argument}
\end{equation}%
which is equivalent to the desired inequality in the statement of the
theorem. To see \eqref{duality argument}, we note that%
\begin{equation*}
\widetilde{D}_{w}\leq \left[ w\right] _{A_{2}}\left( \widetilde{D}%
_{w^{-1}}\right) ^{-1}\ ,
\end{equation*}%
follows from%
\begin{eqnarray*}
\left\langle \widetilde{D}_{w}\phi ,\phi \right\rangle _{L^{2}}
&=&\sum\limits_{I\in \mathcal{D}}\left\vert \widehat{\phi }\left( I\right)
\right\vert^{2}\,\left\langle w\right\rangle _{\pi I}=\sum\limits_{I\in \mathcal{D}}\left\vert 
\widehat{\phi }\left( I\right) \right\vert^{2}\frac{\left\langle w\right\rangle _{\pi
I}\left\langle w^{-1}\right\rangle _{\pi I}}{\left\langle
w^{-1}\right\rangle _{\pi I}} \\
&\leq &\left[ w\right] _{A_{2}}\sum\limits_{I\in \mathcal{D}}\left\vert \widehat{\phi }%
\left( I\right) \right\vert^{2}\frac{1}{\left\langle w^{-1}\right\rangle _{\pi I}}=%
\left[ w\right] _{A_{2}}\left\langle \left( \widetilde{D}_{w^{-1}}\right)
^{-1}\phi ,\phi \right\rangle _{L^{2}}.
\end{eqnarray*}%
Now we continue by applying \eqref{same as} with $w$ replaced by $w^{-1}$ to
get%
\begin{equation*}
\widetilde{D}_{w}\leq \left[ w\right] _{A_{2}}\left( \widetilde{D}%
_{w^{-1}}\right) ^{-1}\leq \left[ w\right] _{A_{2}}\left[ w^{-1}\right]
_{A_{2}}\left( M_{w^{-1}}\right) ^{-1}=\left[ w\right] _{A_{2}}^{2}M_{w}\ .
\end{equation*}
\end{proof}

Again, using this Theorem \ref{ModifiedSquare} with the function $\phi=w^{-%
\frac{1}{2}}\mathbf{1}_{I}$ for $I\in\mathcal{D}$ yields

\begin{equation}  \label{ModSquareFunctionEst}
\sum_{J\subset I} \widehat{w^{-\frac{1}{2}}}(J)^2\left\langle
w\right\rangle_{\pi J}\lesssim \left[w\right]^2_{A_2}\vert I\vert\quad
\forall I\in\mathcal{D}
\end{equation}
which will be used frequently in the proof below.

\section{Proof of Theorem \protect\ref{linearA_2_pararesolution}}

Our method of attack on Theorem \ref{linearA_2_pararesolution} will use the language of paraproducts. We expand
the operator $M_{w^{\frac{1}{2}}}\mathcal{S}M_{w^{-\frac{1}{2}}}$ in term of
the canonical paraproducts with symbols $w^{\frac{1}{2}}$ and $w^{-\frac{1}{2%
}}$ to obtain, 
\begin{equation}
\left( \mathsf{P}_{\widehat{w^{\frac{1}{2}}}}^{(0,1)}+\mathsf{P}_{\widehat{%
w^{\frac{1}{2}}}}^{(1,0)}+\mathsf{P}_{\langle w^{\frac{1}{2}}\rangle
}^{(0,0)}\right) \mathcal{S}\left( \mathsf{P}_{\widehat{w^{-\frac{1}{2}}}%
}^{(0,1)}+\mathsf{P}_{\widehat{w^{-\frac{1}{2}}}}^{(1,0)}+\mathsf{P}%
_{\langle w^{-\frac{1}{2}}\rangle }^{(0,0)}\right) .  \label{Prod_Expan}
\end{equation}%
This results in nine terms and we will study each of these separately to see
that they are controlled by (no worse than) the linear characteristic of the
weight. Doing so will provide a proof of Theorem \ref{linearA_2_pararesolution}.

In what follows, any time there are `internal' zeros, the Haar shift $%
\mathcal{S}$ can be absorbed, and we are left with simply studying a
modified (shifted) paraproduct.

\subsection{Estimating Easy Terms}

There are four easy terms that arise from \eqref{Prod_Expan}, and they are
easy because the composition of the paraproducts reduce to classical
paraproduct type operators. To motivate our approach, we point out that
simple computations (that absorb the Haar shift and are given explicitly
below) give 
\begin{equation}
\mathsf{P}_{\widehat{w^{\frac{1}{2}}}}^{(1,0)}\mathcal{S}\mathsf{P}_{%
\widehat{w^{-\frac{1}{2}}}}^{(0,1)}\approx \mathsf{P}_{\widehat{w^{\frac{1}{2%
}}}\circ \widehat{w^{-\frac{1}{2}}}}^{(1,1)};  \label{Easy1}
\end{equation}

\begin{equation}  \label{Easy2}
\mathsf{P}_{\widehat{w^{\frac{1}{2}}}}^{(1,0)} \mathcal{S} \mathsf{P}%
_{\langle w^{-\frac{1}{2}}\rangle}^{(0,0)}\approx \mathsf{P}_{\widehat{w^{%
\frac{1}{2}}}\circ \langle w^{-\frac{1}{2}}\rangle}^{(1,0)};
\end{equation}

\begin{equation}  \label{Easy3}
\mathsf{P}_{\langle w^{\frac{1}{2}}\rangle}^{(0,0)}\mathcal{S}\mathsf{P}_{%
\widehat{w^{-\frac{1}{2}}}}^{(0,1)}\approx \mathsf{P}_{\langle w^{\frac{1}{2}%
}\rangle\circ \widehat{w^{-\frac{1}{2}}}}^{(0,1)};
\end{equation}

\begin{equation}  \label{Easy4}
\mathsf{P}_{\langle w^{\frac{1}{2}}\rangle}^{(0,0)}\mathcal{S}\mathsf{P}%
_{\langle w^{-\frac{1}{2}}\rangle}^{(0,0)}\approx \mathsf{P}_{\langle w^{%
\frac{1}{2}}\rangle\circ \langle w^{-\frac{1}{2}}\rangle}^{(0,0)}.
\end{equation}

Here $a\circ b$ is the Schur product of the sequences $a$ and $b$, and we
are letting $\approx $ mean that there has been a shift in one of the basis
elements appearing in the definition of the Haar basis (this presents no
problem in the analysis of these operators since it simply results in a
change in the absolute constants appearing; we will make this rigorous
shortly). Notice that \eqref{Easy2}, \eqref{Easy3} and \eqref{Easy4} are of
the type considered in Lemma \ref{ClassicalCharacterization} and so have a
complete characterization, while \eqref{Easy1} can be handled by Lemma \ref%
{ClassicalCharacterization2}. In each of these characterizations we are of
course left with showing that the norm on the symbol for these classical
operators can be controlled by the linear power of the characteristic of the
weight. We now make these ideas precise.

\subsubsection{Estimating Term \eqref{Easy1}}

Note that a simple computation shows that 
\begin{equation}
\mathsf{P}_{\widehat{w^{\frac{1}{2}}}}^{(1,0)}\mathcal{S}\mathsf{P}_{%
\widehat{w^{-\frac{1}{2}}}}^{(0,1)}=\sum_{I\in \mathcal{D}}\widehat{w^{\frac{%
1}{2}}}(I_{-})\widehat{w^{-\frac{1}{2}}}(I)h_{I_{-}}^{1}\otimes h_{I}^{1}. 
\notag
\end{equation}%
By the Carleson Embedding Theorem we see that 
\begin{eqnarray*}
\left\vert \left\langle \mathsf{P}_{\widehat{w^{\frac{1}{2}}}}^{(1,0)}%
\mathcal{S}\mathsf{P}_{\widehat{w^{-\frac{1}{2}}}}^{(0,1)}\phi ,\psi
\right\rangle _{L^{2}}\right\vert &=&\left\vert \sum_{I\in \mathcal{D}}%
\widehat{w^{\frac{1}{2}}}(I_{-})\widehat{w^{-\frac{1}{2}}}(I)\left\langle
\phi ,h_{I}^{1}\right\rangle _{L^{2}}\left\langle \psi
,h_{I_{-}}^{1}\right\rangle _{L^{2}}\right\vert \\
&\leq &\sum_{I\in \mathcal{D}}\left\vert \widehat{w^{\frac{1}{2}}}(I_{-})%
\widehat{w^{-\frac{1}{2}}}(I)\left\langle \phi ,h_{I}^{1}\right\rangle
_{L^{2}}\left\langle \psi ,h_{I_{-}}^{1}\right\rangle _{L^{2}}\right\vert \\
&\leq &\left( \sum_{I\in \mathcal{D}}\left\vert \widehat{w^{\frac{1}{2}}}%
(I_{-})\widehat{w^{-\frac{1}{2}}}(I) \left\langle \phi\right\rangle_I^2
\right\vert \right) ^{\frac{1}{2}}\left( \sum_{I\in \mathcal{D}}\left\vert 
\widehat{w^{\frac{1}{2}}}(I_{-})\widehat{w^{-\frac{1}{2}}}(I) \left\langle
\psi\right\rangle_{I_-}^2\right\vert \right) ^{\frac{1}{2}} \\
&\lesssim &\left\Vert \phi \right\Vert _{L^{2}}\left\Vert \psi \right\Vert
_{L^{2}}\sup_{J\in \mathcal{D}}\frac{1}{\left\vert J\right\vert }%
\sum_{I\subset J}\left\vert \widehat{w^{\frac{1}{2}}}(I_{-})\widehat{w^{-%
\frac{1}{2}}}(I)\right\vert .
\end{eqnarray*}%
However, by Cauchy-Schwarz we then have 
\begin{eqnarray*}
\frac{1}{\left\vert J\right\vert }\sum_{I\subset J}\left\vert \widehat{w^{%
\frac{1}{2}}}(I_{-})\widehat{w^{-\frac{1}{2}}}(I)\right\vert &\leq &\frac{1}{%
\left\vert J\right\vert }\left\Vert w^{\frac{1}{2}}\mathbf{1}_{J}\right\Vert
_{L^{2}}\left\Vert w^{-\frac{1}{2}}\mathbf{1}_{J}\right\Vert _{L^{2}} \\
&=&\left( \left\langle w\right\rangle _{J}\left\langle w^{-1}\right\rangle
_{J}\right) ^{\frac{1}{2}}\leq \left[ w\right] _{A_{2}}^{\frac{1}{2}},
\end{eqnarray*}%
which gives 
\begin{equation*}
\left\Vert \mathsf{P}_{\widehat{w^{\frac{1}{2}}}}^{(1,0)}\mathcal{S}\mathsf{P%
}_{\widehat{w^{-\frac{1}{2}}}}^{(0,1)}\right\Vert _{L^{2}\rightarrow
L^{2}}\lesssim \left[ w\right] _{A_{2}}^{\frac{1}{2}}\leq \left[ w\right]
_{A_{2}},
\end{equation*}%
yielding the desired linear bound for term \eqref{Easy1}.

\subsubsection{Estimating Terms \eqref{Easy2} and \eqref{Easy3}}

These terms have are symmetric and so we focus only on the first term. In
this case term \eqref{Easy2} reduces to a classical paraproduct operator
with a symbol given by the product of two sequences as indicated by the
notation $\approx $. Indeed, we have the identity, 
\begin{equation}
\mathsf{P}_{\widehat{w^{\frac{1}{2}}}}^{(1,0)}\mathcal{S}\mathsf{P}_{\langle
w^{-\frac{1}{2}}\rangle }^{(0,0)}=\sum_{I\in \mathcal{D}}\widehat{w^{\frac{1%
}{2}}}(I_{-})\langle w^{-\frac{1}{2}}\rangle _{I}h_{I_{-}}^{1}\otimes h_{I}.
\notag
\end{equation}%
Now by the Carleson Embedding Theorem applied to $\psi $ we have that%
\begin{eqnarray*}
\left\vert \left\langle \mathsf{P}_{\widehat{w^{\frac{1}{2}}}}^{(1,0)}%
\mathcal{S}\mathsf{P}_{\langle w^{-\frac{1}{2}}\rangle }^{(0,0)}\phi ,\psi
\right\rangle _{L^{2}}\right\vert &=&\left\vert \sum_{I\in \mathcal{D}}%
\widehat{w^{\frac{1}{2}}}(I_{-})\langle w^{-\frac{1}{2}}\rangle
_{I}\left\langle \phi ,h_{I}\right\rangle _{L^{2}}\left\langle \psi
,h_{I_{-}}^{1}\right\rangle _{L^{2}}\right\vert \\
&\leq &\left\Vert \phi \right\Vert _{L^{2}}\left( \sum_{I\in \mathcal{D}%
}\left\vert \widehat{w^{\frac{1}{2}}}(I_{-})\langle w^{-\frac{1}{2}}\rangle
_{I}\right\vert ^{2} \left\langle \psi\right\rangle_{I_-}^2\right) ^{\frac{1%
}{2}} \\
&\lesssim &\left\Vert \phi \right\Vert _{L^{2}}\left\Vert \psi \right\Vert
_{L^{2}}\sup_{J\in \mathcal{D}}\frac{1}{\left\vert J\right\vert }%
\sum_{I\subset J}\left\vert \widehat{w^{\frac{1}{2}}}(I_{-})\langle w^{-%
\frac{1}{2}}\rangle _{I}\right\vert ^{2}.
\end{eqnarray*}%
But 
\begin{equation*}
\sup_{J\in \mathcal{D}}\frac{1}{|J|}\sum_{I\subset J}\left\vert \widehat{w^{%
\frac{1}{2}}}(I_{-})\langle w^{-\frac{1}{2}}\rangle _{I}\right\vert ^{2}\leq
\sup_{J\in \mathcal{D}}\frac{1}{|J|}\sum_{I\subset J}\widehat{w^{\frac{1}{2}}%
}(I_{-})^{2}\langle w^{-1}\rangle _{I}\lesssim \left[ w\right] _{A_{2}}^{2},
\end{equation*}%
where we have used the linear bound for the square function in %
\eqref{ModSquareFunctionEst} in the last inequality. Thus, 
\begin{equation*}
\left\Vert \mathsf{P}_{\widehat{w^{\frac{1}{2}}}}^{(1,0)}\mathcal{S}\mathsf{P%
}_{\langle w^{-\frac{1}{2}}\rangle }^{(0,0)}\right\Vert _{L^{2}\rightarrow
L^{2}}\lesssim \left[ w\right] _{A_{2}}
\end{equation*}%
which gives the desired linear bound in terms of the $A_{2}$ characteristic.

\subsubsection{ Estimating Term \eqref{Easy4}}

Term \eqref{Easy4} reduces to a standard Haar multiplier. Indeed, we have
the identity 
\begin{equation}
\mathsf{P}_{\langle w^{\frac{1}{2}}\rangle }^{(0,0)}\mathcal{S}\mathsf{P}%
_{\langle w^{-\frac{1}{2}}\rangle }^{(0,0)}=\sum_{I\in \mathcal{D}}\langle
w^{\frac{1}{2}}\rangle _{I}\langle w^{-\frac{1}{2}}\rangle
_{I_{-}}h_{I_{-}}\otimes h_{I},  \notag
\end{equation}%
where $\left( h_{J}\otimes h_{K}\right) f\equiv \left\langle
f,h_{K}\right\rangle h_{J}$, and then 
\begin{equation*}
\left\Vert \mathsf{P}_{\langle w^{\frac{1}{2}}\rangle }^{(0,0)}\mathcal{S}%
\mathsf{P}_{\langle w^{-\frac{1}{2}}\rangle }^{(0,0)}\right\Vert
_{L^{2}\rightarrow L^{2}}=\sup_{I\in \mathcal{D}}\langle w^{\frac{1}{2}%
}\rangle _{I}\langle w^{-\frac{1}{2}}\rangle _{I_{-}}\lesssim \sup_{I\in 
\mathcal{D}}\left\langle w\right\rangle _{I}^{\frac{1}{2}}\left\langle
w^{-1}\right\rangle _{I}^{\frac{1}{2}}=\left[ w\right] _{A_{2}}^{\frac{1}{2}%
}\leq \left[ w\right] _{A_{2}}.
\end{equation*}%
This gives the desired linear estimate for \eqref{Easy4}.

\subsection{Estimating Hard Terms}

There are five remaining terms to be controlled. These include the four
difficult terms,

\begin{eqnarray}
\mathsf{P}_{\widehat{w^{\frac{1}{2}}}}^{(0,1)}\mathcal{S}\mathsf{P}_{%
\widehat{w^{-\frac{1}{2}}}}^{(0,1)} & \approx & \mathsf{P}_{\widehat{w^{%
\frac{1}{2}}}}^{(0,1)} \mathsf{P}_{\widehat{w^{-\frac{1}{2}}}}^{(0,1)};
\label{Difficult1} \\
\mathsf{P}_{\widehat{w^{\frac{1}{2}}}}^{(0,1)}\mathcal{S}\mathsf{P}_{\langle
w^{-\frac{1}{2}}\rangle}^{(0,0)} & \approx & \mathsf{P}_{\widehat{w^{\frac{1%
}{2}}}}^{(0,1)} \mathsf{P}_{\langle w^{-\frac{1}{2}}\rangle}^{(0,0)};
\label{Difficult2} \\
\mathsf{P}_{\widehat{w^{\frac{1}{2}}}}^{(1,0)}\mathcal{S}\mathsf{P}_{%
\widehat{w^{-\frac{1}{2}}}}^{(1,0)} & \approx & \mathsf{P}_{\widehat{w^{%
\frac{1}{2}}}}^{(1,0)} \mathsf{P}_{\widehat{w^{-\frac{1}{2}}}}^{(1,0)};
\label{Difficult3} \\
\mathsf{P}_{\langle w^{\frac{1}{2}}\rangle}^{(0,0)}\mathcal{S}\mathsf{P}_{%
\widehat{w^{-\frac{1}{2}}}}^{(1,0)} & \approx & \mathsf{P}_{\langle w^{\frac{%
1}{2}}\rangle}^{(0,0)} \mathsf{P}_{\widehat{w^{-\frac{1}{2}}}}^{(1,0)}.
\label{Difficult4}
\end{eqnarray}

To estimate terms \eqref{Difficult1} and \eqref{Difficult3} we will rely on
disbalanced Haar functions adapted to the weight $w$ and $w^{-1}$. For these
terms we will also proceed by computing the norm of the operators in
question by using duality. Key to this will be the application of the
Carleson Embedding Theorem. The proof of the necessary estimates for these
terms are carried out in subsection \ref{subsec:Difficult13}. Terms %
\eqref{Difficult2} and \eqref{Difficult4} will be handled via a similar
method; their analysis is handled in subsection \ref{subsec:Difficult24}.

The remaining very difficult term is the one for which the Haar shift can
not be absorbed into one of the paraproducts. Namely, we need to control the
following expression, 
\begin{equation}
\mathsf{P}_{\widehat{w^{\frac{1}{2}}}}^{(0,1)}\mathcal{S}\mathsf{P}_{%
\widehat{w^{-\frac{1}{2}}}}^{(1,0)}.  \label{VeryDifficult}
\end{equation}%
In this situation, we will explicitly compute $\left\langle \mathcal{S}%
h_{J}^{1},h_{L}^{1}\right\rangle _{L^{2}}$ and see that the shift presents
no problem in how the operator behaves, nor in estimating its norm in terms
of the $A_{2}$ characteristic $\left[ w\right] _{A_{2}}$. This is carried
out in subsection \ref{subsec:VeryDifficult}.

\subsubsection{Estimating Terms \eqref{Difficult1} and \eqref{Difficult3}}

\label{subsec:Difficult13}

Once again these two terms are symmetric and so it suffices to prove the
desired estimate only for \eqref{Difficult1}. We need to show that 
\begin{equation}
\left\Vert \mathsf{P}_{\widehat{w^{\frac{1}{2}}}}^{(0,1)}\mathcal{S}\mathsf{P%
}_{\widehat{w^{-\frac{1}{2}}}}^{(0,1)}\right\Vert _{L^{2}}\lesssim \left[ w%
\right] _{A_{2}}.
\end{equation}%
Proceeding by duality, we fix $\phi ,\psi \in L^{2}$ and consider 
\begin{eqnarray*}
\left\langle \mathsf{P}_{\widehat{w^{\frac{1}{2}}}}^{(0,1)}\mathcal{S}%
\mathsf{P}_{\widehat{w^{-\frac{1}{2}}}}^{(0,1)}\phi ,\psi \right\rangle
_{L^{2}} &=&\left\langle \mathcal{S}\mathsf{P}_{\widehat{w^{-\frac{1}{2}}}%
}^{(0,1)}\phi ,\mathsf{P}_{\widehat{w^{\frac{1}{2}}}}^{(1,0)}\psi
\right\rangle _{L^{2}} \\
&=&\sum_{J,K\in \mathcal{D}}\widehat{w^{-\frac{1}{2}}}(J)\left\langle \phi
\right\rangle _{J}\widehat{w^{\frac{1}{2}}}(K)\widehat{\psi }(K)\left\langle
h_{J_{-}},h_{K}^{1}\right\rangle _{L^{2}} \\
&=&\sum_{J\in \mathcal{D}}\widehat{w^{-\frac{1}{2}}}(J)\left\langle \phi
\right\rangle _{J}\left( \sum_{K\in \mathcal{D}}\widehat{w^{\frac{1}{2}}}(K)%
\widehat{\psi }(K)\left\langle h_{J_{-}},h_{K}^{1}\right\rangle
_{L^{2}}\right) \\
&=&\sum_{J\in \mathcal{D}}\widehat{w^{-\frac{1}{2}}}(J)\left\langle \phi
\right\rangle _{J}\left( \frac{1}{\left\vert J_{-}\right\vert ^{\frac{1}{2}}}%
\sum_{K\subsetneq J_{-}}\widehat{w^{\frac{1}{2}}}(K)\widehat{\psi }(K)\delta
\left( K,J_{-}\right) \right) \\
&=&\sum_{J\in \mathcal{D}}\widehat{w^{-\frac{1}{2}}}(J)\left\langle \phi
\right\rangle _{J}\left( \widehat{\psi w^{\frac{1}{2}}}(J_{-})-\widehat{w^{%
\frac{1}{2}}}(J_{-})\left\langle \psi \right\rangle _{J_{-}}-\widehat{\psi }%
(J_{-})\left\langle w^{\frac{1}{2}}\right\rangle _{J_{-}}\right) \\
&\equiv &T_{1}+T_{2}+T_{3}.
\end{eqnarray*}%
The fifth equality follows by an application of the product formula for Haar
coefficients. We need to show that each of these terms has the desired
estimate 
\begin{equation*}
\left\vert T_{j}\right\vert \lesssim \left[ w\right] _{A_{2}}\left\Vert \phi
\right\Vert _{L^{2}}\left\Vert \psi \right\Vert _{L^{2}}
\end{equation*}%
since this will imply that 
\begin{equation*}
\left\Vert \mathsf{P}_{\widehat{w^{\frac{1}{2}}}}^{(0,1)}\mathcal{S}\mathsf{P%
}_{\widehat{w^{-\frac{1}{2}}}}^{(0,1)}\right\Vert _{L^{2}\rightarrow
L^{2}}=\sup_{\phi ,\psi \in L^{2}}\left\vert \left\langle \mathsf{P}_{%
\widehat{w^{\frac{1}{2}}}}^{(0,1)}\mathcal{S}\mathsf{P}_{\widehat{w^{-\frac{1%
}{2}}}}^{(0,1)}\phi ,\psi \right\rangle _{L^{2}}\right\vert \lesssim \left[ w%
\right] _{A_{2}}
\end{equation*}%
which implies the desired estimate on the norm of the operator.

Consider term $T_{3}$. This term can be controlled by 
\begin{eqnarray*}
\left\vert T_{3}\right\vert &\leq &\left\vert \sum_{J\in \mathcal{D}}%
\widehat{w^{-\frac{1}{2}}}(J)\left\langle \phi \right\rangle _{J}\widehat{%
\psi }(J_{-})\left\langle w^{\frac{1}{2}}\right\rangle _{J_{-}}\right\vert \\
&\leq &\left\Vert \psi \right\Vert _{L^{2}}\left( \sum_{J\in \mathcal{D}}%
\widehat{w^{-\frac{1}{2}}}(J)^{2}\left\langle w^{\frac{1}{2}}\right\rangle
_{J_{-}}^{2}\left\langle \phi \right\rangle _{J_{-}}^{2}\right) ^{\frac{1}{2}%
} \\
&\lesssim &\left[ w\right] _{A_{2}}\left\Vert \phi \right\Vert
_{L^{2}}\left\Vert \psi \right\Vert _{L^{2}},
\end{eqnarray*}%
with the last estimate following by an application of the Carleson Embedding
Theorem using \eqref{SquareFunctionEst2}. For the term $T_{2}$, we have 
\begin{eqnarray*}
\left\vert T_{2}\right\vert &\leq &\left\vert \sum_{J\in \mathcal{D}}%
\widehat{w^{-\frac{1}{2}}}(J)\widehat{w^{\frac{1}{2}}}(J_{-})\left\langle
\psi \right\rangle _{J}\left\langle \phi \right\rangle _{J_{-}}\right\vert \\
&\leq &\left( \sum_{J\in \mathcal{D}}\left\vert \widehat{w^{-\frac{1}{2}}}(J)%
\widehat{w^{\frac{1}{2}}}(J_{-})\right\vert \left\langle \psi \right\rangle
_{J}^{2}\right) ^{\frac{1}{2}}\left( \sum_{J\in \mathcal{D}}\left\vert 
\widehat{w^{-\frac{1}{2}}}(J)\widehat{w^{\frac{1}{2}}}(J_{-})\right\vert
\left\langle \phi \right\rangle _{J_{-}}^{2}\right) ^{\frac{1}{2}} \\
&\lesssim &\left[ w\right] _{A_{2}}^{\frac{1}{2}}\left\Vert \phi \right\Vert
_{L^{2}}\left\Vert \psi \right\Vert _{L^{2}}.
\end{eqnarray*}%
Again we have used here two applications of the Carleson Embedding Theorem,
one for $\phi $ and one for $\psi $, since we have that 
\begin{eqnarray*}
\sum_{K\subset J}\left\vert \widehat{w^{-\frac{1}{2}}}(K)\widehat{w^{\frac{1%
}{2}}}(K_{-})\right\vert &\leq &\left( \sum_{K\subset J}\widehat{w^{-\frac{1%
}{2}}}(K)^{2}\right) ^{\frac{1}{2}}\left( \sum_{K\subset J}\widehat{w^{\frac{%
1}{2}}}(K_{-})^{2}\right) ^{\frac{1}{2}} \\
&\leq &\sqrt{w^{-1}(J)w(J)}=\sqrt{\frac{w^{-1}(J)w(J)}{\left\vert
J\right\vert ^{2}}}\left\vert J\right\vert \\
&\leq &\left[ w\right] _{A_{2}}^{\frac{1}{2}}\left\vert J\right\vert .
\end{eqnarray*}

Finally, we prove the estimate for term $T_{1}$, which requires the use of
disbalanced Haar functions. We expand the terms in $T_{1}$ using Haar
functions with respect to two different disbalanced bases. In particular,
using \eqref{disbalanced_defs2} we have 
\begin{eqnarray*}
\left\langle \psi w^{\frac{1}{2}},h_{J_{-}}\right\rangle _{L^{2}}
&=&C_{J_{-}}(w)\left\langle \psi w^{\frac{1}{2}},h_{J_{-}}^{w}\right\rangle
_{L^{2}}+D_{J_{-}}(w)\left\langle \psi w^{\frac{1}{2}}\right\rangle _{J_{-}}
\\
&=&C_{J_{-}}(w)\left\langle \psi w^{-\frac{1}{2}},h_{J_{-}}^{w}\right\rangle
_{L^{2}(w)}+D_{J_{-}}(w)\left\langle w\right\rangle _{J_{-}}\mathbb{E}%
_{J_{-}}^{w}\left( \psi w^{-\frac{1}{2}}\right)
\end{eqnarray*}%
and 
\begin{eqnarray*}
\left\langle w^{-\frac{1}{2}},h_{J}\right\rangle _{L^{2}}
&=&C_{J}(w^{-1})\left\langle w^{-\frac{1}{2}},h_{J}^{w^{-1}}\right\rangle
_{L^{2}}+D_{J}(w^{-1})\left\langle w^{-\frac{1}{2}}\right\rangle _{J} \\
&=&C_{J}(w^{-1})\left\langle w^{\frac{1}{2}},h_{J}^{w^{-1}}\right\rangle
_{L^{2}(w^{-1})}+D_{J}(w^{-1})\left\langle w^{-\frac{1}{2}}\right\rangle
_{J}.
\end{eqnarray*}%
Then we can write the term $T_{1}$ as a sum of four terms, namely 
\begin{equation*}
T_{1}=S_{1}+S_{2}+S_{3}+S_{4}
\end{equation*}%
with 
\begin{eqnarray*}
S_{1} &=&\sum_{K\in \mathcal{D}}\left\langle \phi \right\rangle
_{K}C_{K_{-}}(w)C_{K}(w^{-1})\left\langle w^{\frac{1}{2}},h_{K}^{w^{-1}}%
\right\rangle _{L^{2}(w^{-1})}\left\langle \psi w^{-\frac{1}{2}%
},h_{K_{-}}^{w}\right\rangle _{L^{2}(w)} \\
S_{2} &=&\sum_{K\in \mathcal{D}}\left\langle \phi \right\rangle
_{K}C_{K_{-}}(w)D_{K}(w^{-1})\left\langle \psi w^{-\frac{1}{2}%
},h_{K_{-}}^{w}\right\rangle _{L^{2}(w)}\left\langle w^{-\frac{1}{2}%
}\right\rangle _{K} \\
S_{3} &=&\sum_{K\in \mathcal{D}}\left\langle \phi \right\rangle
_{K}D_{K_{-}}(w)C_{K}(w^{-1})\left\langle w\right\rangle _{K_{-}}\mathbb{E}%
_{K_{-}}^{w}\left( \psi w^{-\frac{1}{2}}\right) \left\langle w^{\frac{1}{2}%
},h_{K}^{w^{-1}}\right\rangle _{L^{2}(w^{-1})} \\
S_{4} &=&\sum_{K\in \mathcal{D}}\left\langle \phi \right\rangle
_{K}D_{K_{-}}(w)D_{K}(w^{-1})\left\langle w^{-\frac{1}{2}}\right\rangle
_{K}\left\langle w\right\rangle _{K_{-}}\mathbb{E}_{K_{-}}^{w}\left( \psi
w^{-\frac{1}{2}}\right) .
\end{eqnarray*}%
We need to show that each term can be estimated by a constant times $\left[ w%
\right] _{A_{2}}\left\Vert \phi \right\Vert _{2}\left\Vert \psi \right\Vert
_{2}$, which would then imply 
\begin{equation*}
\left\vert T_{1}\right\vert \lesssim \left[ w\right] _{A_{2}}\left\Vert \phi
\right\Vert _{2}\left\Vert \psi \right\Vert _{2}
\end{equation*}%
as required. We now proceed to prove the necessary estimates.

Consider the term $S_{1}$. We have 
\begin{eqnarray*}
\left\vert S_{1}\right\vert &\leq &\left\vert \sum_{K\in \mathcal{D}%
}\left\langle \phi \right\rangle _{K}C_{K_{-}}(w)C_{K}(w^{-1})\left\langle
w^{\frac{1}{2}},h_{K}^{w^{-1}}\right\rangle _{L^{2}(w^{-1})}\left\langle
\psi w^{-\frac{1}{2}},h_{K_{-}}^{w}\right\rangle _{L^{2}(w)}\right\vert \\
&\leq &\left\Vert \psi w^{-\frac{1}{2}}\right\Vert _{L^{2}(w)}\left(
\sum_{K\in \mathcal{D}}\left\langle \phi \right\rangle
_{K}^{2}C_{K_{-}}(w)^{2}C_{K}(w^{-1})^{2}\left\langle w^{-\frac{1}{2}%
},h_{K}^{w}\right\rangle _{L^{2}(w)}^{2}\right) ^{\frac{1}{2}} \\
&=&\left\Vert \psi \right\Vert _{L^{2}}\left( \sum_{K\in \mathcal{D}%
}\left\langle \phi \right\rangle _{K}^{2}\frac{\left\langle w\right\rangle
_{K_{-+}}\left\langle w\right\rangle _{K_{--}}}{\left\langle w\right\rangle
_{K_{-}}}\frac{\left\langle w^{-1}\right\rangle _{K_{+}}\left\langle
w^{-1}\right\rangle _{K_{-}}}{\left\langle w^{-1}\right\rangle _{K}}%
\left\langle w^{\frac{1}{2}},h_{K}^{w^{-1}}\right\rangle
_{L^{2}(w^{-1})}^{2}\right) ^{\frac{1}{2}} \\
&\lesssim &\left[ w\right] _{A_{2}}^{\frac{1}{2}}\left\Vert \psi \right\Vert
_{L^{2}}\left( \sum_{K\in \mathcal{D}}\left\langle \phi \right\rangle
_{K}^{2}\left\langle w^{\frac{1}{2}},h_{K}^{w^{-1}}\right\rangle
_{L^{2}(w^{-1})}^{2}\right) ^{\frac{1}{2}} \\
&\lesssim &\left[ w\right] _{A_{2}}^{\frac{1}{2}}\left\Vert \phi \right\Vert
_{L^{2}}\left\Vert \psi \right\Vert _{L^{2}}.
\end{eqnarray*}%
Here the last inequality follows by the Carleson Embedding Theorem since 
\begin{equation*}
\sum_{K\subset J}\left\langle w^{\frac{1}{2}},h_{K}^{w^{-1}}\right\rangle
_{L^{2}(w^{-1})}^{2}\leq \left\vert J\right\vert \quad \forall J\in \mathcal{%
D}.
\end{equation*}%
Turning to term $S_{2}$ we have%
\begin{eqnarray*}
\left\vert S_{2}\right\vert &=&\left\vert \sum_{K\in \mathcal{D}%
}\left\langle \phi \right\rangle _{K}C_{K_{-}}(w)D_{K}(w^{-1})\left\langle
\psi w^{-\frac{1}{2}},h_{J_{-}}^{w}\right\rangle _{L^{2}(w)}\left\langle w^{-%
\frac{1}{2}}\right\rangle _{K}\right\vert \\
&\leq &\left( \sum_{K\in \mathcal{D}}\left\langle \psi w^{-\frac{1}{2}%
},h_{K_{-}}^{w}\right\rangle _{L^{2}(w)}^{2}\frac{\left\langle
w\right\rangle _{K_{-+}}\left\langle w\right\rangle _{K_{--}}}{\left\langle
w\right\rangle _{K_{-}}}\left\langle w^{-\frac{1}{2}}\right\rangle
_{K}^{2}\right) ^{\frac{1}{2}}\left( \sum_{K\in \mathcal{D}}\frac{\widehat{%
w^{-1}}(K)^{2}}{\left\langle w^{-1}\right\rangle _{K}^{2}}\left\langle \phi
\right\rangle _{K}^{2}\right) ^{\frac{1}{2}} \\
&\lesssim &\left[ w\right] _{A_{2}}^{\frac{1}{2}}\left\Vert \psi w^{-\frac{1%
}{2}}\right\Vert _{L^{2}(w)}\left( \sum_{K\in \mathcal{D}}\frac{\widehat{%
w^{-1}}(K)^{2}}{\left\langle w^{-1}\right\rangle _{K}^{2}}\left\langle \phi
\right\rangle _{K}^{2}\right) ^{\frac{1}{2}} \\
&\leq &\left[ w\right] _{A_{2}}^{\frac{1}{2}}\left\Vert \psi \right\Vert
_{L^{2}}\left( \sum_{K\in \mathcal{D}}\frac{\widehat{w^{-1}}(K)^{2}}{%
\left\langle w^{-1}\right\rangle _{K}^{2}}\left\langle \phi \right\rangle
_{K}^{2}\right) ^{\frac{1}{2}} \\
&\leq &\left[ w\right] _{A_{2}}\left\Vert \phi \right\Vert
_{L^{2}}\left\Vert \psi \right\Vert _{L^{2}}
\end{eqnarray*}%
with the last estimate following from the Carleson Embedding Theorem, once
we prove the following estimate: 
\begin{equation}
\sum_{K\subset J}\frac{\widehat{w^{-1}}(K)^{2}}{\left\langle
w^{-1}\right\rangle _{K}^{2}}\lesssim \left[ w\right] _{A_{2}}\left\vert
J\right\vert \quad \forall J\in \mathcal{D}.  \label{InterpEst}
\end{equation}%
To prove \eqref{InterpEst} recall the following two estimates in \cite%
{Petermichl}*{Lemma 5.2 and Lemma 5.3}, translated to the notation of this
paper: 
\begin{eqnarray*}
\sum_{K\subset J}\frac{\widehat{w^{-1}}(K)^{2}}{\left\langle
w^{-1}\right\rangle _{K}} &\lesssim &\left[ w\right] _{A_{2}}w^{-1}(J)\quad
\forall J\in \mathcal{D} \\
\sum_{K\subset J}\frac{\widehat{w^{-1}}(K)^{2}}{\left\langle
w^{-1}\right\rangle _{K}^{3}} &\lesssim &w(J)\quad \forall J\in \mathcal{D}.
\end{eqnarray*}%
These two estimates coupled with a simple application of Cauchy-Schwarz then
proves \eqref{InterpEst}. Indeed, 
\begin{eqnarray*}
\sum_{K\subset J}\frac{\widehat{w^{-1}}(K)^{2}}{\left\langle
w^{-1}\right\rangle _{K}^{2}} &=&\sum_{K\subset J}\frac{\widehat{w^{-1}}%
(K)^{2}}{\left\langle w^{-1}\right\rangle _{K}}1\frac{1}{\left\langle
w^{-1}\right\rangle _{K}} \\
&\leq &\sqrt{\sum_{K\subset J}\frac{\widehat{w^{-1}}(K)^{2}}{\left\langle
w^{-1}\right\rangle _{K}}1^{2}}\ \sqrt{\sum_{K\subset J}\frac{\widehat{w^{-1}%
}(K)^{2}}{\left\langle w^{-1}\right\rangle _{K}}\frac{1}{\left\langle
w^{-1}\right\rangle _{K}^{2}}} \\
&\lesssim &\sqrt{\left[ w\right] _{A_{2}}w^{-1}(J)}\ \sqrt{w(J)} \\
&=&\sqrt{\left[ w\right] _{A_{2}}}\sqrt{w(J)w^{-1}(J)}\leq \left[ w\right]
_{A_{2}}\left\vert J\right\vert .
\end{eqnarray*}

For the term $S_{3}$ we have 
\begin{eqnarray*}
\left\vert S_{3}\right\vert &=&\left\vert \sum_{K\in \mathcal{D}%
}\left\langle \phi \right\rangle _{K}D_{K_{-}}(w)C_{K}(w^{-1})\left\langle
w\right\rangle _{K_{-}}\mathbb{E}_{K_{-}}^{w}\left( \psi w^{-\frac{1}{2}%
}\right) \left\langle w^{\frac{1}{2}},h_{K}^{w^{-1}}\right\rangle
_{L^{2}(w^{-1})}\right\vert \\
&=&\left\vert \sum_{K\in \mathcal{D}}\left\langle \phi \right\rangle _{K}%
\frac{\widehat{w}(K_{-})}{\left\langle w\right\rangle _{K_{-}}}\left\langle
w\right\rangle _{K_{-}}\mathbb{E}_{K_{-}}^{w}\left( \psi w^{-\frac{1}{2}%
}\right) \sqrt{\frac{\left\langle w^{-1}\right\rangle _{K_{+}}\left\langle
w^{-1}\right\rangle _{K_{-}}}{\left\langle w^{-1}\right\rangle _{K}}}%
\left\langle w^{\frac{1}{2}},h_{K}^{w^{-1}}\right\rangle
_{L^{2}(w^{-1})}\right\vert \\
&=&\left\vert \sum_{K\in \mathcal{D}}\left\langle \phi \right\rangle _{K}%
\widehat{w}(K_{-})\mathbb{E}_{K_{-}}^{w}\left( \psi w^{-\frac{1}{2}}\right) 
\sqrt{\frac{\left\langle w^{-1}\right\rangle _{K_{+}}\left\langle
w^{-1}\right\rangle _{K_{-}}}{\left\langle w^{-1}\right\rangle _{K}}}%
\left\langle w^{\frac{1}{2}},h_{K}^{w^{-1}}\right\rangle
_{L^{2}(w^{-1})}\right\vert \\
&\leq &\left( \sum_{K\in \mathcal{D}}\left\langle \phi \right\rangle
_{K}^{2}\left\langle w^{\frac{1}{2}},h_{K}^{w^{-1}}\right\rangle
_{L^{2}(w^{-1})}^{2}\right) ^{\frac{1}{2}}\left( \sum_{K\in \mathcal{D}}%
\frac{\left\langle w^{-1}\right\rangle _{K_{+}}\left\langle
w^{-1}\right\rangle _{K_{-}}}{\left\langle w^{-1}\right\rangle _{K}}\widehat{%
w}(K_{-})^{2}\mathbb{E}_{K}^{w}\left( \psi w^{-\frac{1}{2}}\right)
^{2}\right) ^{\frac{1}{2}} \\
&\lesssim &\left\Vert \phi \right\Vert _{L^{2}}\left( \sum_{K\in \mathcal{D}}%
\frac{\left\langle w^{-1}\right\rangle _{K_{+}}\left\langle
w^{-1}\right\rangle _{K_{-}}}{\left\langle w^{-1}\right\rangle _{K}}\widehat{%
w}(K_{-})^{2}\mathbb{E}_{K_{-}}^{w}\left( \psi w^{-\frac{1}{2}}\right)
^{2}\right) ^{\frac{1}{2}} \\
&\lesssim &\left[ w\right] _{A_{2}}\left\Vert \psi \right\Vert
_{L^{2}}\left\Vert \phi \right\Vert _{L^{2}}.
\end{eqnarray*}%
For the last inequality we used the Carleson Embedding Theorem twice, first
applied to $\phi $ as above, and then to $\psi $ upon noting that 
\begin{equation*}
\sum_{K\subset J}\frac{\left\langle w^{-1}\right\rangle _{K_{+}}\left\langle
w^{-1}\right\rangle _{K_{-}}}{\left\langle w^{-1}\right\rangle _{K}}\widehat{%
w}(K_{-})^{2}\lesssim \sum_{K\subset J}\left\langle w^{-1}\right\rangle _{K}%
\widehat{w}(K_{-})^{2}\lesssim \left[ w\right] _{A_{2}}^{2}w(J)\quad \forall
J\in \mathcal{D}
\end{equation*}%
via the linear bound for the modified square function %
\eqref{ModSquareFunctionEst}.

Finally consider the term $S_4$. We have 
\begin{eqnarray*}
\left\vert S_4 \right\vert & = & \left\vert \sum_{K\in\mathcal{D}}
\left\langle \phi\right\rangle_K D_{K_-}(w) D_K(w^{-1}) \left\langle w^{-%
\frac{1}{2}}\right\rangle_K \left\langle w\right\rangle_{K_-} \mathbb{E}%
_{K_-}^{w}\left(\psi w^{-\frac{1}{2}}\right) \right\vert \\
& = & \left\vert \sum_{K\in\mathcal{D}} \left\langle \phi\right\rangle_K 
\frac{\widehat{w}(K_-)}{\left\langle w\right\rangle_{K_-}} \frac{\widehat{%
w^{-1}}(K)}{\left\langle w^{-1}\right\rangle_{K}} \left\langle w^{-\frac{1}{2%
}}\right\rangle_K \left\langle w\right\rangle_{K_-} \mathbb{E}%
_{K_-}^{w}\left(\psi w^{-\frac{1}{2}}\right)\right\vert \\
& = & \left\vert \sum_{K\in\mathcal{D}} \left\langle \phi\right\rangle_K 
\frac{\widehat{w^{-1}}(K) \widehat{w}(K_-)}{\left\langle
w^{-1}\right\rangle_{K}} \left\langle w^{-\frac{1}{2}}\right\rangle_K 
\mathbb{E}_{K_-}^{w}\left(\psi w^{-\frac{1}{2}}\right)\right\vert \\
& \leq & \left(\sum_{K\in\mathcal{D}} \left\langle \phi\right\rangle_K^2 
\frac{\widehat{w^{-1}}(K) \widehat{w}(K_-)}{\left\langle
w^{-1}\right\rangle_{K}} \left\langle w^{-\frac{1}{2}}\right\rangle_K^2
\right)^{\frac{1}{2}}\left(\sum_{K\in\mathcal{D}} \frac{\widehat{w^{-1}}(K) 
\widehat{w}(K_-)}{\left\langle w^{-1}\right\rangle_{K}} \mathbb{E}
_{K_-}^{w}\left(\psi w^{-\frac{1}{2}}\right)^2 \right)^{\frac{1}{2}} \\
& \leq & \left(\sum_{K\in\mathcal{D}} \left\langle \phi\right\rangle_K^2 
\widehat{w^{-1}}(K) \widehat{w}(K_-)\right)^{\frac{1}{2}}\left(\sum_{K\in%
\mathcal{D}} \frac{\widehat{w^{-1}}(K) \widehat{w}(K_-)}{\left\langle
w^{-1}\right\rangle_{K}} \mathbb{E}_{K_-}^{w}\left(\psi w^{-\frac{1}{2}%
}\right)^2 \right)^{\frac{1}{2}}. \\
\end{eqnarray*}
Now note that in \cite{Petermichl} the following estimates are proved 
\begin{eqnarray*}
\sum_{K\subset J} \widehat{w^{-1}}(K) \widehat{w}(K_-) & \lesssim & \left[w%
\right]_{A_2}\left\vert J\right\vert \quad\forall J\in\mathcal{D} \\
\sum_{K\subset J} \frac{\widehat{w^{-1}}(K) \widehat{w}(K_-)}{\left\langle
w^{-1}\right\rangle_{K}} & \lesssim & \left[w\right]_{A_2}w \left( J\right)
\quad\forall J\in\mathcal{D}. \\
\end{eqnarray*}
The Carleson Embedding Theorem applied to $\phi$ and $\psi$ then gives that 
\begin{equation*}
\left\vert S_4\right\vert\lesssim \left[w\right]_{A_2}
\left\Vert\psi\right\Vert_{L^2}\left\Vert \phi\right\Vert_{L^2}
\end{equation*}
as desired.

\subsubsection{Estimating Terms \eqref{Difficult2} and \eqref{Difficult4}}

\label{subsec:Difficult24}

Once again there is symmetry between these terms, so we focus only on %
\eqref{Difficult2}.

Fix $\phi ,\psi \in L^{2}$. We compute 
\begin{eqnarray*}
\left\langle \mathsf{P}_{\widehat{w^{\frac{1}{2}}}}^{(0,1)}\mathcal{S}%
\mathsf{P}_{\left\langle w^{-\frac{1}{2}}\right\rangle }^{(0,0)}\phi ,\psi
\right\rangle _{L^{2}} &=&\left\langle \mathcal{S}\mathsf{P}_{\left\langle
w^{-\frac{1}{2}}\right\rangle }^{(0,0)}\phi ,\mathsf{P}_{\widehat{w^{\frac{1%
}{2}}}}^{(1,0)}\psi \right\rangle _{L^{2}} \\
&=&\sum_{J,K\in \mathcal{D}}\widehat{\phi }(J)\left\langle w^{-\frac{1}{2}%
}\right\rangle _{J}\widehat{\psi }(K)\widehat{w^{\frac{1}{2}}}%
(K)\left\langle h_{K}^{1},h_{J_{-}}\right\rangle _{L^{2}} \\
&=&\sum_{J\in \mathcal{D}}\widehat{\phi }(J)\left\langle w^{-\frac{1}{2}%
}\right\rangle _{J}\left( \frac{1}{\left\vert J_{-}\right\vert ^{\frac{1}{2}}%
}\sum_{K\subsetneq J_{-}}\widehat{\psi }(K)\widehat{w^{\frac{1}{2}}}%
(K)\delta (J_{-},K)\right) \\
&=&\sum_{J\in \mathcal{D}}\widehat{\phi }(J)\left\langle w^{-\frac{1}{2}%
}\right\rangle _{J}\left( \widehat{\psi w^{\frac{1}{2}}}(J_{-})-\widehat{%
\psi }(J_{-})\left\langle w^{\frac{1}{2}}\right\rangle _{J_{-}}-\widehat{w^{%
\frac{1}{2}}}(J_{-})\left\langle \psi \right\rangle _{J_{-}}\right) \\
&\equiv &T_{1}+T_{2}+T_{3}.
\end{eqnarray*}%
We show that each term satisfies $\left\vert T_{j}\right\vert \lesssim \left[
w\right] _{A_{2}}\left\Vert \phi \right\Vert _{L^{2}}\left\Vert \psi
\right\Vert _{L^{2}}$, which would imply that 
\begin{equation*}
\left\Vert \mathsf{P}_{\widehat{w^{\frac{1}{2}}}}^{(0,1)}\mathcal{S}\mathsf{P%
}_{\left\langle w^{-\frac{1}{2}}\right\rangle }^{(0,0)}\right\Vert
_{L^{2}\rightarrow L^{2}}=\sup_{\phi ,\psi \in L^{2}}\left\vert \left\langle 
\mathsf{P}_{\widehat{w^{\frac{1}{2}}}}^{(0,1)}\mathcal{S}\mathsf{P}%
_{\left\langle w^{-\frac{1}{2}}\right\rangle }^{(0,0)}\phi ,\psi
\right\rangle _{L^{2}}\right\vert \lesssim \left[ w\right] _{A_{2}}.
\end{equation*}%
proving the desired norm on the operator in question.

The term $T_2$ is easiest since we have 
\begin{eqnarray*}
\left\vert T_2\right\vert & = & \left\vert \sum_{J\in\mathcal{D}} \widehat{%
\phi}(J)\left\langle w^{-\frac{1}{2}}\right\rangle_{J} \widehat{\psi}%
(J_-)\left\langle w^{\frac{1}{2}}\right\rangle_{J_-}\right\vert \\
& \lesssim & \left[ w\right]_{A_2}^{\frac{1}{2}} \sum_{J\in\mathcal{D}}
\left\vert\widehat{\phi}(J) \widehat{\psi}(J_-)\right\vert \\
& \leq & \left[ w\right]_{A_2}^{\frac{1}{2}}\left\Vert \phi\right\Vert_{L^2}
\left\Vert \psi\right\Vert_{L^2}
\end{eqnarray*}
using the definition of $A_2$, Cauchy-Schwarz and Parseval. For term $T_3$,
we find 
\begin{eqnarray*}
\left\vert T_3\right\vert & = & \left\vert \sum_{J\in\mathcal{D}} \widehat{%
\phi}(J)\left\langle w^{-\frac{1}{2}}\right\rangle_{J} \widehat{w^{\frac{1}{2%
}}}(J_-)\left\langle \psi\right\rangle_{J_-}\right\vert \\
& \leq & \left\Vert \phi\right\Vert_{L^2}\left( \sum_{J\in\mathcal{D}}
\left\langle w^{-\frac{1}{2}}\right\rangle_{J}^2 \widehat{w^{\frac{1}{2}}}%
(J_-)^2\left\langle \psi\right\rangle_{J_-}^2\right)^{\frac{1}{2}} \\
& \lesssim & \left[w \right]_{A_2} \left\Vert
\phi\right\Vert_{L^2}\left\Vert \psi\right\Vert_{L^2}
\end{eqnarray*}
with the last estimate following from the Carleson Embedding Theorem and %
\eqref{ModSquareFunctionEst} (which shows that the Carleson Embedding
Theorem applies).

For term $T_1$, we require disbalanced Haar functions again. Note that it is
possible to write 
\begin{eqnarray*}
T_1 & \equiv & \sum_{J\in\mathcal{D}} \widehat{\phi}(J)\left\langle w^{-%
\frac{1}{2}}\right\rangle_{J} \widehat{\psi w^{\frac{1}{2}}}(J_-) \\
& = & \sum_{J\in\mathcal{D}} \widehat{\phi}(J)\left\langle w^{-\frac{1}{2}%
}\right\rangle_{J} \left\langle\psi w^{\frac{1}{2}}, C_{J_-}(w)
h_{J_-}^w+D_{J_-}(w) h_{J_-}^1\right\rangle_{L^2} \\
& = & \sum_{J\in\mathcal{D}} \widehat{\phi}(J)\left\langle w^{-\frac{1}{2}}
\right\rangle_{J} C_{J_-}(w) \left\langle\psi w^{\frac{1}{2}},
h_{J_-}^w\right\rangle_{L^2} +\sum_{J\in\mathcal{D}} \widehat{\phi}%
(J)\left\langle w^{-\frac{1}{2}}\right\rangle_{J} D_{J_-}(w)
\left\langle\psi w^{\frac{1}{2}}, h_{J_-}^1\right\rangle_{L^2} \\
& = & \sum_{J\in\mathcal{D}} \widehat{\phi}(J)\left\langle w^{-\frac{1}{2}}
\right\rangle_{J} C_{J_-}(w) \left\langle\psi w^{-\frac{1}{2}},
h_{J_-}^w\right\rangle_{L^2(w)} \\
& & + \sum_{J\in\mathcal{D}} \widehat{\phi}(J)\left\langle w^{-\frac{1}{2}%
}\right\rangle_{J} D_{J_-}(w) \left\langle w\right\rangle_{J_-} \mathbb{E}%
_{J_-}^w\left(\psi w^{-\frac{1}{2}}\right) \\
& \equiv & S_1+S_2.
\end{eqnarray*}
For term $S_1$ observe that 
\begin{equation*}
\left\langle w^{-\frac{1}{2}} \right\rangle_{J} C_{J_-}(w)\lesssim
\left\langle w^{-\frac{1}{2}} \right\rangle_{J} \sqrt{\left\langle w
\right\rangle_{J}}\leq \sqrt{\left\langle w^{-1} \right\rangle_{J}
\left\langle w \right\rangle_{J}}\leq\left[ w\right]_{A_2}^{\frac{1}{2}},
\end{equation*}
and so 
\begin{equation*}
\left\vert S_1\right\vert \lesssim \left[ w\right]_{A_2}^{\frac{1}{2}%
}\left\Vert \phi\right\Vert_{L^2} \left\Vert \psi w^{-\frac{1}{2}%
}\right\Vert_{L^2(w)}= \left[ w\right]_{A_2}^{\frac{1}{2}}\left\Vert
\phi\right\Vert_{L^2} \left\Vert \psi\right\Vert_{L^2}.
\end{equation*}
For term $S_2$ we have 
\begin{eqnarray*}
\left\vert S_2\right\vert & = & \left\vert \sum_{J\in\mathcal{D}} \widehat{%
\phi}(J)\left\langle w^{-\frac{1}{2}}\right\rangle_{J} D_{J_-}(w)
\left\langle w\right\rangle_{J_-} \mathbb{E}_{J_-}^w\left(\psi w^{-\frac{1}{2%
}}\right)\right\vert \\
& \leq & \left\Vert \phi\right\Vert_{L^2} \left(\sum_{J\in\mathcal{D}}
\left\langle w^{-\frac{1}{2}}\right\rangle_{J}^2 D_{J_-}(w)^2 \left\langle
w\right\rangle_{J_-}^2 \mathbb{E}_{J_-}^w\left(\psi w^{-\frac{1}{2}%
}\right)^2 \right)^{\frac{1}{2}} \\
& = & \left\Vert \phi\right\Vert_{L^2} \left(\sum_{J\in\mathcal{D}}
\left\langle w^{-\frac{1}{2}}\right\rangle_{J}^2 \frac{\widehat{w}(J_-)^2}{%
\left\langle w\right\rangle_{J_-}^2} \left\langle w\right\rangle_{J_-}^2 
\mathbb{E}_{J_-}^w\left(\psi w^{-\frac{1}{2}}\right)^2 \right)^{\frac{1}{2}}
\\
& = & \left\Vert \phi\right\Vert_{L^2} \left(\sum_{J\in\mathcal{D}}
\left\langle w^{-\frac{1}{2}}\right\rangle_{J}^2 \widehat{w}(J_-)^2\mathbb{E}%
_{J_-}^w\left(\psi w^{-\frac{1}{2}}\right)^2 \right)^{\frac{1}{2}} \\
& \lesssim & \left[ w\right]_{A_2}\left\Vert \phi\right\Vert_{L^2}
\left\Vert \psi\right\Vert_{L^2}
\end{eqnarray*}
with the last estimate following from the Carleson Embedding Theorem and the
linear bound for the modified square function \eqref{ModSquareFunctionEst}.

\subsubsection{Estimating Term \eqref{VeryDifficult}}

\label{subsec:VeryDifficult}

We are left with analyzing $\mathsf{P}_{\widehat{w^{\frac{1}{2}}}}^{(0,1)}\mathcal{S}%
\mathsf{P}_{\widehat{w^{-\frac{1}{2}}}}^{(1,0)}$ as an operator on $L^2$.  But, by the analysis above we have from \eqref{Prod_Expan} that
$$
\mathsf{P}_{\widehat{w^{\frac{1}{2}}}}^{(0,1)}\mathcal{S}%
\mathsf{P}_{\widehat{w^{-\frac{1}{2}}}}^{(1,0)}= M_{w^{\frac{1}{2}}}\mathcal{S} M_{w^{-\frac{1}{2}}}-\eqref{Easy1}-\eqref{Easy2}-\eqref{Easy3}-\eqref{Easy4}-\eqref{Difficult1}-\eqref{Difficult2}-\eqref{Difficult3}-\eqref{Difficult4}.
$$
And, using all the estimates above, we find that
$$
\left\Vert \mathsf{P}_{\widehat{w^{\frac{1}{2}}}}^{(0,1)}\mathcal{S}%
\mathsf{P}_{\widehat{w^{-\frac{1}{2}}}}^{(1,0)}\right\Vert_{L^2\to L^2} \lesssim \left[ w\right]_{A_2}+\left\Vert M_{w^{\frac{1}{2}}}\mathcal{S} M_{w^{-\frac{1}{2}}}\right\Vert_{L^2\to L^2}.
$$
Now using Petermichl's result, Theorem \ref{linearA_2}, (see also \cites{H, H2, MR2657437}) we have that $\left\Vert M_{w^{\frac{1}{2}}}\mathcal{S} M_{w^{-\frac{1}{2}}}\right\Vert_{L^2\to L^2}\lesssim \left[w\right]_{A_2}$.  This combines to give:
$$
\left\Vert \mathsf{P}_{\widehat{w^{\frac{1}{2}}}}^{(0,1)}\mathcal{S}%
\mathsf{P}_{\widehat{w^{-\frac{1}{2}}}}^{(1,0)}\right\Vert_{L^2\to L^2} \lesssim \left[ w\right]_{A_2}
$$
as required.

We pose a question that we are unable to resolve as of this writing.  Resolution of this question would provide an alternate proof of the linear bound of the Hilbert transform.

\begin{question}
\label{question1}
Can one give a direct proof that
$$
\left\Vert \mathsf{P}_{\widehat{w^{\frac{1}{2}}}}^{(0,1)}\mathcal{S}%
\mathsf{P}_{\widehat{w^{-\frac{1}{2}}}}^{(1,0)}\right\Vert_{L^2\to L^2} \lesssim \left[ w\right]_{A_2}
$$
without resorting to the use of Theorem \ref{linearA_2}?  A direct proof of the above estimate would provide an alternate proof of Theorem \ref{linearA_2}.
\end{question}

\subsection{An Alternate Approach to Estimating Term \eqref{VeryDifficult}}

In this final section we show that the tools of this paper are almost sufficient to provide a direct proof of \eqref{VeryDifficult}.

Fix $\phi,\psi\in L^2$. Now, observe that we have 
\begin{eqnarray}
\left\langle \mathsf{P}_{\widehat{w^{\frac{1}{2}}}}^{(0,1)}\mathcal{S}%
\mathsf{P}_{\widehat{w^{-\frac{1}{2}}}}^{(1,0)}\phi ,\psi\right\rangle
_{L^{2} } &=&\left\langle \mathcal{S}\mathsf{P}_{\widehat{w^{-\frac{1}{2}}}%
}^{(1,0)}\phi,\mathsf{P}_{\widehat{w^{\frac{1}{2}}}}^{(1,0)}\psi\right%
\rangle _{L^{2} }  \notag \\
&=&\sum_{J,L\in \mathcal{D}}\widehat{w^{-\frac{1}{2}}}(J)\widehat{w^{\frac{1%
}{2}}}(L)\widehat{\phi}(J)\widehat{\psi}(L)\left\langle \mathcal{S}
h_{J}^{1},h_{L}^{1}\right\rangle _{L^{2} } .  \notag
\end{eqnarray}
We then further specialize to the case when $J\cap L=\emptyset$ and when $J\cap L\neq\emptyset$.  The case when $J\cap L\neq \emptyset$ can be handled via the techniques of this paper.  The case when $J\cap L=\emptyset$ requires a new idea.

\subsubsection{Estimating Term \eqref{VeryDifficult}:  $J\cap L\neq \emptyset$}

We can then split this into three sums, when $L=J$, when $L\subsetneq J$ and
when $J\subsetneq L$. Doing so, we see that

\begin{eqnarray*}
\left\langle \mathsf{P}_{\widehat{w^{\frac{1}{2}}}}^{(0,1)}\mathcal{S}%
\mathsf{P}_{\widehat{w^{-\frac{1}{2}}}}^{(1,0)}\phi ,\psi\right\rangle
_{L^{2} } & = & \sum_{J\in\mathcal{D}} \widehat{w^{-\frac{1}{2}}}(J)\widehat{%
w^{\frac{1}{2}}}(J)\widehat{\phi}(J) \widehat{\psi}(J)\left\langle \mathcal{S%
} h_{J}^{1},h_{J}^{1}\right\rangle _{L^{2} } \\
& & +\sum_{J\in \mathcal{D}}\widehat{w^{-\frac{1}{2}}}(J)\widehat{\phi}%
(J)\left(\sum_{L\subsetneq J} \widehat{w^{\frac{1}{2}}}(L) \widehat{\psi}%
(L)\left\langle \mathcal{S} h_{J}^{1},h_{L}^{1}\right\rangle _{L^{2} }
\right) \\
& & + \sum_{L\in \mathcal{D}} \widehat{w^{\frac{1}{2}}}(L) \widehat{\psi}(L)
\left(\sum_{J\subsetneq L} \widehat{w^{-\frac{1}{2}}}(J) \widehat{\phi}(J)
\left\langle \mathcal{S} h_{J}^{1},h_{L}^{1}\right\rangle _{L^{2} }\right) \\
& = & T_1+T_2+T_3.
\end{eqnarray*}
Consider now term $T_1$ and observe 
\begin{eqnarray*}
\left\vert T_1\right\vert & = & \left\vert \sum_{J\in\mathcal{D}} \widehat{%
w^{-\frac{1}{2}}}(J)\widehat{w^{\frac{1}{2}}}(J)\widehat{\phi}(J) \widehat{%
\psi}(J)\left\langle \mathcal{S} h_{J}^{1},h_{J}^{1}\right\rangle _{L^{2}
}\right\vert \\
& \leq & \sum_{J\in\mathcal{D}} \left\vert \widehat{w^{-\frac{1}{2}}}(J)%
\widehat{w^{\frac{1}{2}}}(J)\widehat{\phi}(J) \widehat{\psi}(J)\right\vert
\left\Vert h_J^1 \right\Vert_{L^2}^2 \\
& \leq & \sum_{J\in\mathcal{D}} \left\vert\frac{\widehat{w^{-\frac{1}{2}}}(J)%
\widehat{w^{\frac{1}{2}}}(J)}{\left\vert J\right\vert}\widehat{\phi}(J) 
\widehat{\psi}(J)\right\vert \\
& \leq & \left[ w\right]_{A_2}^{\frac{1}{2}}\left\Vert
\phi\right\Vert_{L^2}\left\Vert \psi\right\Vert_{L^2}.
\end{eqnarray*}
Here we make an obvious estimate using the definition of $A_2$,
Cauchy-Schwarz and that $\mathcal{S}:L^2\rightarrow L^2$ has norm at most
one.

There is a symmetry between terms $T_{2}$ and $T_{3}$, so we only handle
term $T_{2}$. We first make a computation of $\left\langle \mathcal{S}%
h_{J}^{1},h_{L}^{1}\right\rangle _{L^{2}}$ when $L\subsetneq J$. Set 
\begin{equation*}
\mathfrak{s}(J)\equiv \sqrt{2}\sum_{K\in \mathcal{D}:K_{-}\supsetneq J}\frac{%
1}{\left\vert K\right\vert }.
\end{equation*}%
Letting $h_{K}(J)\equiv h_{K}(c(J))$ wherein $c(J)$ is the center of the
dyadic interval $J$ and using the definition of $\mathcal{S}$ it is a
straightforward computation to show that 
\begin{eqnarray*}
\left\langle \mathcal{S}h_{J}^{1},h_{L}^{1}\right\rangle _{L^{2}}
&=&\sum_{K\supsetneq J}h_{K}(J)\left\langle h_{L}^{1},h_{K_{-}}\right\rangle
_{L^{2}} \\
&=&\sum_{\substack{ K_{-}\supsetneq L  \\ K\supsetneq J}}h_{K}(J)h_{K_{-}}(L)
\\
&=&\sum_{\substack{ K_{-}\supsetneq L  \\ K\supsetneq J}}\frac{\delta (J,K)}{%
\left\vert K\right\vert ^{\frac{1}{2}}}\frac{\delta (L,K_{-})}{\left\vert
K_{-}\right\vert ^{\frac{1}{2}}} \\
&=&\sqrt{2}\sum_{\substack{ K_{-}\supsetneq L  \\ K\supsetneq J}}\frac{%
\delta (J,K)\delta (L,K_{-})}{\left\vert K\right\vert } \\
&=&\sqrt{2}\sum_{K\in \mathcal{D}:K_{-}\supsetneq J}\frac{1}{\left\vert
K\right\vert }\equiv \mathfrak{s}(J).
\end{eqnarray*}%
We have used that $\left\vert K_{-}\right\vert =\frac{1}{2}\left\vert
K\right\vert $, and that $\delta (J,K)=\delta (L,K_{-})$ since $L\subsetneq
J\subsetneq K$. Note that we have 
\begin{equation*}
\mathfrak{s}(J)\leq \sqrt{2}\sum_{K\supsetneq J}\frac{1}{\left\vert
K\right\vert }=\frac{\sqrt{2}}{\left\vert J\right\vert }.
\end{equation*}%
Now define the function $v_{J}\equiv w^{\frac{1}{2}}\left( \mathbf{1}%
_{J_{-}}-\mathbf{1}_{J_{+}}\right) $ and observe 
\begin{equation*}
\widehat{v_{J}}(L)=\left\{ 
\begin{array}{ccc}
\widehat{w^{\frac{1}{2}}}(L) & : & L\subset J_{-} \\ 
-\widehat{w^{\frac{1}{2}}}(L) & : & L\subset J_{+}.%
\end{array}%
\right.
\end{equation*}%
Then using the product formula we have 
\begin{eqnarray*}
\sum_{L\subsetneq J}\widehat{w^{\frac{1}{2}}}(L)\widehat{\psi }%
(L)\left\langle \mathcal{S}h_{J}^{1},h_{L}^{1}\right\rangle _{L^{2}} &=&%
\mathfrak{s}(J)\sum_{L\subsetneq J}\widehat{w^{\frac{1}{2}}}(L)\widehat{\psi 
}(L) \\
&=&\mathfrak{s}(J)\left( \sum_{L\subset J_{-}}\widehat{w^{\frac{1}{2}}}(L)%
\widehat{\psi }(L)-\sum_{L\subset J_{+}}(-\widehat{w^{\frac{1}{2}}}(L))%
\widehat{\psi }(L)\right) \\
&=&\mathfrak{s}(J)\left\vert J\right\vert ^{\frac{1}{2}}\frac{1}{\left\vert
J\right\vert ^{\frac{1}{2}}}\sum_{L\subsetneq J}\widehat{\psi }(L)\widehat{%
v_{J}}(L)\delta (L,J) \\
&=&\mathfrak{s}(J)\left\vert J\right\vert ^{\frac{1}{2}}\left( \widehat{%
v_{J}\psi }(J)-\widehat{\psi }(J)\left\langle v_{J}\right\rangle _{J}-%
\widehat{v_{J}}(J)\left\langle \psi \right\rangle _{J}\right) .
\end{eqnarray*}%
However, simple computations show that 
\begin{eqnarray*}
\left\langle v_{J}\right\rangle _{J} &=&\left\vert J\right\vert ^{-\frac{1}{2%
}}\widehat{w^{\frac{1}{2}}}(J) \\
\widehat{v_{J}}(J) &=&\left\vert J\right\vert ^{\frac{1}{2}}\left\langle w^{%
\frac{1}{2}}\right\rangle _{J} \\
\widehat{\psi v_{J}}(J) &=&\left\vert J\right\vert ^{\frac{1}{2}%
}\left\langle \psi w^{\frac{1}{2}}\right\rangle _{J}.
\end{eqnarray*}%
Thus, we have 
\begin{equation*}
\sum_{L\subsetneq J}\widehat{w^{\frac{1}{2}}}(L)\widehat{\psi }%
(L)\left\langle \mathcal{S}h_{J}^{1},h_{L}^{1}\right\rangle _{L^{2}}=%
\mathfrak{s}(J)\left\vert J\right\vert \left\langle \psi w^{\frac{1}{2}%
}\right\rangle _{J}-\mathfrak{s}(J)\widehat{w^{\frac{1}{2}}}(J)\widehat{\psi 
}(J)-\mathfrak{s}(J)\left\vert J\right\vert \left\langle w^{\frac{1}{2}%
}\right\rangle _{J}\left\langle \psi \right\rangle _{J}.
\end{equation*}%
This then yields that 
\begin{eqnarray*}
T_{2} &=&\sum_{J\in \mathcal{D}}\widehat{w^{-\frac{1}{2}}}(J)\widehat{\phi }%
(J)\left( \sum_{L\subsetneq J}\widehat{w^{\frac{1}{2}}}(L)\widehat{\psi }%
(L)\left\langle \mathcal{S}h_{J}^{1},h_{L}^{1}\right\rangle _{L^{2}}\right)
\\
&=&\sum_{J\in \mathcal{D}}\widehat{w^{-\frac{1}{2}}}(J)\widehat{\phi }%
(J)\left( \mathfrak{s}(J)\left\vert J\right\vert \left\langle \psi w^{\frac{1%
}{2}}\right\rangle _{J}-\mathfrak{s}(J)\widehat{w^{\frac{1}{2}}}(J)\widehat{%
\psi }(J)-\mathfrak{s}(J)\left\vert J\right\vert \left\langle w^{\frac{1}{2}%
}\right\rangle _{J}\left\langle \psi \right\rangle _{J}\right) \\
&\equiv &S_{1}+S_{2}+S_{3}.
\end{eqnarray*}%
Using the estimate that $\mathfrak{s}(J)\lesssim \left\vert J\right\vert
^{-1}$, it is immediate to see that 
\begin{equation*}
\left\vert S_{2}\right\vert \leq \left[ w\right] _{A_{2}}^{\frac{1}{2}%
}\left\Vert \phi \right\Vert _{L^{2}}\left\Vert \psi \right\Vert _{L^{2}}.
\end{equation*}%
It is also an easy application of the Carleson Embedding Theorem, again
using the linear bound for the square function and that $\mathfrak{s}%
(J)\left\vert J\right\vert \lesssim 1$, to see that 
\begin{equation*}
\left\vert S_{3}\right\vert \lesssim \left\Vert \phi \right\Vert
_{L^{2}}\left( \sum_{J\in \mathcal{D}}\widehat{w^{-\frac{1}{2}}}%
(J)^{2}\left\langle w^{\frac{1}{2}}\right\rangle _{J}^{2}\left\langle \psi
\right\rangle _{J}^{2}\right) ^{\frac{1}{2}}\lesssim \left[ w\right]
_{A_{2}}\left\Vert \phi \right\Vert _{L^{2}}\left\Vert \psi \right\Vert
_{L^{2}}.
\end{equation*}%
Finally, note that for term $S_{1}$, by an application of Cauchy-Schwarz and
again that $\mathfrak{s}(J)\left\vert J\right\vert \lesssim 1$, we have 
\begin{eqnarray*}
\left\vert S_{1}\right\vert &=&\left\vert \sum_{J\in \mathcal{D}}\mathfrak{s}%
(J)\left\vert J\right\vert \widehat{w^{-\frac{1}{2}}}(J)\widehat{\phi }%
(J)\left\langle \psi w^{\frac{1}{2}}\right\rangle _{J}\right\vert \\
&=&\left\vert \sum_{J\in \mathcal{D}}\mathfrak{s}(J)\left\vert J\right\vert 
\widehat{w^{-\frac{1}{2}}}(J)\widehat{\phi }(J)\left\langle w\right\rangle
_{J}\mathbb{E}_{J}^{w}\left( \psi w^{-\frac{1}{2}}\right) \right\vert \\
&\lesssim &\left\Vert \phi \right\Vert _{L^{2}}\left( \sum_{J\in \mathcal{D}}%
\widehat{w^{-\frac{1}{2}}}(J)^{2}\left\langle w\right\rangle _{J}^{2}\mathbb{%
E}_{J}^{w}\left( \psi w^{-\frac{1}{2}}\right) ^{2}\right) ^{\frac{1}{2}}.
\end{eqnarray*}%
We claim that the following Carleson estimate holds:

\begin{equation}
\sum_{K\subset L}\widehat{w^{-\frac{1}{2}}}\left( K\right) ^{2}\left\langle
w\right\rangle _{K}^{2}\lesssim \left[ w\right] _{A_{2}}^{2}w(L)\quad
\forall L\in \mathcal{D}.  \label{if we can1}
\end{equation}%
Assuming \eqref{if we can1} holds, we can apply the Carleson Embedding
Theorem to conclude that 
\begin{equation*}
\left\vert S_{1}\right\vert \lesssim \left[ w\right] _{A_{2}}\left\Vert \phi
\right\Vert _{L^{2}}\left\Vert \psi w^{-\frac{1}{2}}\right\Vert _{L^{2}(w)}=%
\left[ w\right] _{A_{2}}\left\Vert \phi \right\Vert _{L^{2}}\left\Vert \psi
\right\Vert _{L^{2}}.
\end{equation*}%
Then combining the estimates on $S_{j}$ with $j=1,2,3$ gives that 
\begin{equation*}
\left\vert T_{2}\right\vert \lesssim \left[ w\right] _{A_{2}}\left\Vert \phi
\right\Vert _{L^{2}}\left\Vert \psi \right\Vert _{L^{2}}.
\end{equation*}%
Further, combining the estimates on $T_{j}$ for $j=1,2,3$ gives that 
\begin{equation*}
\sup_{\phi ,\psi \in L^{2}}\left\vert \left\langle \mathsf{P}_{\widehat{w^{%
\frac{1}{2}}}}^{(0,1)}\mathcal{S}\mathsf{P}_{\widehat{w^{-\frac{1}{2}}}%
}^{(1,0)}\phi ,\psi \right\rangle _{L^{2}}\right\vert \lesssim \left[ w%
\right] _{A_{2}}\left\Vert \phi \right\Vert _{L^{2}}\left\Vert \psi
\right\Vert _{L^{2}},
\end{equation*}%
which is the desired estimate.

Thus it only remains to demonstrate \eqref{if we can1}. Fix $L$ and start
with a standard Calder\'{o}n--Zygmund stopping time argument on the function 
$w$. Let $Q_{1}^{0}\equiv L$. Fix $\gamma >1$ large and let $\mathcal{G}%
_{0}=\left\{ Q_{1}^{0}\right\} $ be the ground zero generation. Then let $%
\mathcal{G}_{1}=\mathcal{G}_{1}\left( Q_{1}^{0}\right) =\left\{
Q_{j}^{1}\right\} _{j\in \Gamma _{1}^{0}}$ be the first generation of
maximal dyadic subintervals $Q_{j}^{1}$ of $L$ such that%
\begin{equation*}
\left\langle w\right\rangle _{Q_{j}^{1}}>\gamma \left\langle w\right\rangle
_{Q_{1}^{0}},\ \ \ \ \ j\in \Gamma _{1}^{0}.
\end{equation*}%
Then for each $i\in \Gamma _{1}^{0}$ define $\mathcal{G}_{2}\left(
Q_{i}^{1}\right) =\left\{ Q_{j}^{2}\right\} _{j\in \Gamma _{i}^{1}}$ to be
the collection of maximal dyadic subintervals $Q_{j}^{2}$ of $Q_{i}^{1}$
such that%
\begin{equation*}
\left\langle w\right\rangle _{Q_{j}^{2}}>\gamma \left\langle w\right\rangle
_{Q_{i}^{1}},\ \ \ \ \ j\in \Gamma _{i}^{1}.
\end{equation*}%
Set $\mathcal{G}_{2}=\bigcup\limits_{Q_{i}^{1}\in \mathcal{G}_{1}}\mathcal{G}%
_{2}\left( Q_{i}^{1}\right) $ to be the second generation of subintervals of 
$L$. Continue by recursion to define the $k^{th}$ generation $\mathcal{G}%
_{k} $ of subintervals of $L$ by $\mathcal{G}_{k}\equiv
\bigcup\limits_{Q_{i}^{k-1}\in \mathcal{G}_{k-1}}\mathcal{G}_{k}\left(
Q_{i}^{k-1}\right) $ where $\mathcal{G}_{k}\left( Q_{i}^{k-1}\right)
=\left\{ Q_{j}^{k}\right\} _{j}$ is the collection of maximal dyadic
subintervals $Q_{j}^{k}$ of $Q_{i}^{k-1}$ such that%
\begin{equation*}
\left\langle w\right\rangle _{Q_{j}^{k}}>\gamma \left\langle w\right\rangle
_{Q_{i}^{k-1}},\ \ \ \ \ j\in \Gamma _{i}^{k-1}.
\end{equation*}%
Then define the corona 
\begin{equation*}
\mathcal{C}_{Q_{j}^{k}}=\left\{ K\subset Q_{j}^{k}:K\not\subset Q_{i}^{k+1}%
\text{ for any }Q_{i}^{k+1}\in \mathcal{G}_{k+1}\right\} ,
\end{equation*}%
and denote by $\mathcal{G}=\bigcup\limits_{k=0}^{\infty }\mathcal{G}_{k}$
the collection of all stopping intervals in $L$. Within each corona
\thinspace $\mathcal{C}_{G}$ for $G\in \mathcal{G}$, we get%
\begin{eqnarray*}
\sum_{K\in \mathcal{C}_{G}}\widehat{w^{-\frac{1}{2}}}\left( K\right)
^{2}\left\langle w\right\rangle _{K}^{2} &\lesssim &\gamma \left( \sum_{K\in 
\mathcal{C}_{G}}\widehat{w^{-\frac{1}{2}}}\left( K\right) ^{2}\right)
\left\langle w\right\rangle _{G}^{2} \\
&\leq &\gamma \left( \int_{G}\left\vert w^{-\frac{1}{2}}-\left\langle w^{-%
\frac{1}{2}}\right\rangle _{G}\right\vert ^{2}\right) \left\langle
w\right\rangle _{G}^{2} \\
&\leq &\gamma \int_{G}\left\langle w\right\rangle _{G}^{2}w^{-1}.
\end{eqnarray*}%
Summing up over all the coronas $\mathcal{C}_{G}$ for $G\in \mathcal{G}$
with $\gamma =2$ we get%
\begin{eqnarray*}
\sum_{K\subset L}\widehat{w^{-\frac{1}{2}}}\left( K\right) ^{2}\left\langle
w\right\rangle _{K}^{2} &\lesssim &\sum_{G\in \mathcal{G}}\int_{G}\left%
\langle w\right\rangle _{G}^{2}w^{-1} \\
&=&\sum\limits_{k,j}\left\vert Q_{j}^{k}\right\vert _{w^{-1}}\left( \frac{1}{%
\left\vert Q_{j}^{k}\right\vert }\int_{Q_{j}^{k}}w\right) ^{2} \\
&=&\int_{\mathbb{R}}\sum\limits_{k,j}\left( \frac{1}{\left\vert
Q_{j}^{k}\right\vert }\int_{Q_{j}^{k}}w\right) ^{2}\mathbf{1}%
_{Q_{j}^{k}}\left( x\right) w^{-1}\left( x\right) dx \\
&\approx &\int_{\mathbb{R}}\left( \sup_{k.j:\ x\in Q_{j}^{k}}\frac{1}{%
\left\vert Q_{j}^{k}\right\vert }\int_{Q_{j}^{k}}w\right) ^{2}w^{-1}\left(
x\right) dx \\
&\lesssim &\int_{L}\left\vert Mw\right\vert ^{2}w^{-1}\lesssim \left[ w%
\right] _{A_{2}}^{2}\int_{L}w^{2}w^{-1}=\left[ w\right] _{A_{2}}^{2}%
\int_{L}w,
\end{eqnarray*}%
which gives \eqref{if we can1}. Note that we used that the sequence of
numbers $\left\{ \frac{1}{\left\vert Q_{j}^{k}\right\vert }%
\int_{Q_{j}^{k}}w\right\} _{k,j:\ x\in Q_{j}^{k}}$ is super-geometric in the
sense that consecutive terms have ratio exceeding $\gamma =2$, and so the
sum of their squares is essentially the square of the largest one.

\subsubsection{Estimating Term \eqref{VeryDifficult}: $J\cap L=\emptyset$}
We must consider this case since the Haar shift is not a completely local operator.  It instead sees some long range interaction, and we need to account for this since there can be terms for which $L\cap J=\emptyset$ and $\left\langle \mathcal{S}
h_{J}^{1},h_{L}^{1}\right\rangle _{L^{2} }\neq 0$. A simple example occurs
when $L$ and $J$ are dyadic brothers.   Fortunately, in the case of the identity operator, this term does not appear and the proof would terminate.

The tools used in this paper currently appear to be unable to resolve this term.  We thus pose a refined version of Question \ref{question1}

\begin{question}
\label{question2}
Is the following estimate
$$
\left\Vert \sum_{J,L\in \mathcal{D}: J\cap L=\emptyset}\widehat{w^{-\frac{1}{2}}}(J)\widehat{w^{\frac{1%
}{2}}}(L)\widehat{\phi}(J)\widehat{\psi}(L)\left\langle \mathcal{S}
h_{J}^{1},h_{L}^{1}\right\rangle _{L^{2} }\right\Vert_{L^2\to L^2}\lesssim \left[ w\right]_{A_2}
$$
true?
\end{question}
We again know that this estimate is true by using Theorem \ref{linearA_2}, but are interested in a direct resolution of Question \ref{question2}.  A direct proof of the estimate above would provide a new proof of Theorem \ref{linearA_2}.

\begin{bibdiv}
\begin{biblist}

\bib{H}{article}{
   author={Hyt{\"o}nen, Tuomas P.},
   title={The sharp weighted bound for general Calder\'on-Zygmund operators},
   journal={Ann. of Math. (2)},
   volume={175},
   date={2012},
   number={3},
   pages={1473--1506}
}

\bib{H2}{article}{
   author={Hyt{\"o}nen, Tuomas},
   title={On Petermichl's dyadic shift and the Hilbert transform},
   language={English, with English and French summaries},
   journal={C. R. Math. Acad. Sci. Paris},
   volume={346},
   date={2008},
   number={21-22},
   pages={1133--1136}
}

\bib{MR2657437}{article}{
   author={Lacey, Michael T.},
   author={Petermichl, Stefanie},
   author={Reguera, Maria Carmen},
   title={Sharp $A_2$ inequality for Haar shift operators},
   journal={Math. Ann.},
   volume={348},
   date={2010},
   number={1},
   pages={127--141}
}

\bib{L}{article}{
   author={Lerner, A.},
   title={A Simple Proof of the $A_2$ Conjecture},
   eprint={http://arxiv.org/abs/1202.2824},
   journal={Internat. Math. Res. Notices},
   status={to appear},
   pages={1--11},
   date={2012}
}

\bib{MR2367098}{article}{
   author={Petermichl, Stefanie},
   title={The sharp weighted bound for the Riesz transforms},
   journal={Proc. Amer. Math. Soc.},
   volume={136},
   date={2008},
   number={4},
   pages={1237--1249}
}

\bib{Petermichl}{article}{
   author={Petermichl, S.},
   title={The sharp bound for the Hilbert transform on weighted Lebesgue
   spaces in terms of the classical $A_p$ characteristic},
   journal={Amer. J. Math.},
   volume={129},
   date={2007},
   number={5},
   pages={1355--1375}
}

\bib{Petermichl2}{article}{
   author={Petermichl, Stefanie},
   title={Dyadic shifts and a logarithmic estimate for Hankel operators with
   matrix symbol},
   language={English, with English and French summaries},
   journal={C. R. Acad. Sci. Paris S\'er. I Math.},
   volume={330},
   date={2000},
   number={6},
   pages={455--460}
}

\bib{PetermichlPott}{article}{
   author={Petermichl, S.},
   author={Pott, S.},
   title={An estimate for weighted Hilbert transform via square functions},
   journal={Trans. Amer. Math. Soc.},
   volume={354},
   date={2002},
   number={4},
   pages={1699--1703 (electronic)}
}

\bib{MR1964822}{article}{
   author={Petermichl, S.},
   author={Treil, S.},
   author={Volberg, A.},
   title={Why the Riesz transforms are averages of the dyadic shifts?},
   booktitle={Proceedings of the 6th International Conference on Harmonic
   Analysis and Partial Differential Equations (El Escorial, 2000)},
   journal={Publ. Mat.},
   date={2002},
   number={Vol. Extra},
   pages={209--228}
}

\bib{SSU}{article}{
   author={Sawyer, Eric T.},
   author={Shen, Chun-Yen},
   author={Uriarte-Tuero, Ignacio},
   title={The Two weight theorem for the vector of Riesz transforms:  an expanded version},
   eprint={http://arxiv.org/abs/1302.5093v3}
 }

\bib{V}{article}{
   author={Vagharshakyan, Armen},
   title={Recovering singular integrals from Haar shifts},
   journal={Proc. Amer. Math. Soc.},
   volume={138},
   date={2010},
   number={12},
   pages={4303--4309}
}

\end{biblist}
\end{bibdiv}

\end{document}